\newtheorem{theorem}{Theorem}[section]
\newtheorem{corollary}{Corollary}[section]
\newtheorem{proposition}{Proposition}[section]
\newtheorem{lemma}{Lemma}[section]
\newtheorem{definition}{Definition}[section]
\newenvironment{remark}{{\noindent\sc Remark.}\quad}{~}
\newenvironment{keywords}{{\noindent\bf Keywords.}}{~}
\newenvironment{AMS}{{\noindent\bf AMS subject classifications.}}{~}
\newenvironment{proof}{{\noindent\it Proof.}\quad}{\hfill $\square$\\}
\title{\textsc{Regularized Weighted Discrete Least Squares Approximation Using Gauss Quadrature Points}}
\author{Congpei An\footnotemark[1]
        \quad and\quad Hao-Ning Wu\footnotemark[2]}
\date{}
\begin{document}

\maketitle

\renewcommand{\thefootnote}{\fnsymbol{footnote}}
\footnotetext[1]{School of Economic Mathematics, Southwestern University of Finance and Economics, Chengdu 611130, China.\\ \indent~~ Email address: andbach1984@hotmail.com, ancp@swufe.edu.cn}
\footnotetext[2]{Department of Mathematics, The University of Hong Kong, Hong Kong, China. \\ \indent~~ Email address: haoning.wu@outlook.com, hnwu@hku.hk}

\begin{abstract}
We consider polynomial approximation over the interval $[-1,1]$ by regularized weighted discrete least squares methods with $\ell_2-$ or $\ell_1-$regularization, respectively. As the set of nodes we use Gauss quadrature points (which are zeros of orthogonal polynomials). The number of Gauss quadrature points is $N+1$. For $2L\leq2N+1$, with the aid of Gauss quadrature, we obtain approximation polynomials of degree $L$ in closed form without solving linear algebra or optimization problems. In fact, these approximation polynomials can be expressed in the form of the barycentric interpolation formula \citep{berrut2004barycentric} when an interpolation condition is satisfied.
We then study the approximation quality of the $\ell_2-$regularized approximation polynomial in terms of Lebesgue constants, and the sparsity of the $\ell_1-$regularized approximation polynomial. Finally, we give numerical examples to illustrate these theoretical results and show that a well-chosen regularization parameter can lead to good performance, with or without contaminated data.
\end{abstract}

\begin{keywords}
regularized least squares approximation, Gauss quadrature, Lebesgue constants, sparsity, barycentric interpolation.
\end{keywords}

\begin{AMS}
41A05, 65D99, 65D32, 94A99
\end{AMS}

\section{Introduction}
\label{sec;introduction}
In this paper, we are interested in approximating or recovering a function (possibly noisy) $f\in\mathcal{C}([-1,1])$ by a polynomial
\begin{equation}\label{solutionp}
p_{L}(x)=\sum_{\ell=0}^L\beta_{\ell}\tilde{\Phi}_{\ell}(x)\in \mathbb{P}_L,\quad x\in[-1,1],
\end{equation}
where $\mathbb{P}_L$ is a linear space of polynomials of degree at most $L$, and $\tilde{\Phi}_{\ell}(x)$, $\ell=0,1,\ldots,L$, are normalized orthogonal polynomials \citep{von1939orthogonal,gautschi2004orthogonal}.
As long as the basis $\{\tilde{\Phi}_{\ell}(x)\}_{\ell=0}^L$ for $\mathbb{P}_L$ is given, the next step is to determine coefficients $\beta_{\ell},\,\ell=0,\ldots,L$.
We will consider the $\ell_2-$regularized approximation problem
\begin{equation}\label{problem:l2regu}
\min_{\beta_{\ell}\in\mathbb{R}}~~ \left\{\sum_{j=0}^N\omega_j\left(\sum_{\ell=0}^{L}\beta_{\ell}\tilde{\Phi}_{\ell}(x_j)-f(x_j)\right)^2+\lambda\sum_{\ell=0}^L(\mu_{\ell}\beta_{\ell})^2\right\},\quad\lambda>0,
\end{equation}
and the $\ell_1-$regularized approximation problem
\begin{equation}\label{problem:l1regu}
\min_{\beta_{\ell}\in\mathbb{R}}~~ \left\{\sum_{j=0}^N\omega_j\left(\sum_{\ell=0}^{L}\beta_{\ell}\tilde{\Phi}_{\ell}(x_j)-f(x_j)\right)^2+\lambda\sum_{\ell=0}^L|\mu_{\ell}\beta_{\ell}|\right\},\quad\lambda>0,
\end{equation}
where $f$ is a given continuous function with values (possibly noisy) taken at $N+1$ distinct points $x_0,x_1,\ldots,x_N$ over the interval $[-1,1]$; $2L\leq2N+1$; ${\rm \bf{w}}=[\omega_0,\omega_1,\ldots,\omega_N]^T$ is a vector of positive Gauss quadrature weights \citep{gautschi2004orthogonal}; $\{\mu_{\ell}\}_{\ell=0}^L$ is a nonnegative nondecreasing sequence, which penalizes coefficients $\{\beta_{\ell}\}_{\ell=0}^L$; and $\lambda>0$ is the regularization parameter.

It is known that approximation schemes \eqref{problem:l2regu} and \eqref{problem:l1regu} are special cases of classical penalized least squares methods, see \citet{powell1967maximum}, \citet{von1990data}, \citet{gautschi2004orthogonal}, \citet{lazarov2007balancing}, \citet{cai2009split}, \citet{kim2009ell_1}, \citet{an2012regularized}, \citet{xiang2013regularization} and \citet{zhou2018spherical}. Some optimization methods or iterative algorithms are presented to find minimizers. However, we will concentrate on the aspect of constructing minimizers to problems \eqref{problem:l2regu} and \eqref{problem:l1regu} by means of orthogonal polynomials and  Gauss quadrature  \citep{kress1998numerical,gautschi2004orthogonal,gautschi2011numerical,trefethen2013approximation}.
In this paper, Gauss quadrature will play an important role.
 We assume that the weight function $w:~(-1,1)\rightarrow\mathbb{R}$ is positive, such that
$$\int_{-1}^1w(x)dx<\infty,\quad \int_{-1}^1x^iw(x)dx<\infty,\,i=1,2,\ldots.$$
\begin{definition}\label{def:gauss}
A quadrature formula
\begin{equation*}
\int_{-1}^1w(x)f(x)dx\approx\sum\limits_{j=0}^N\omega_jf(x_j)
\end{equation*}
with $N+1$ distinct quadrature points $x_0,x_1,\ldots,x_N$ is called a \emph{Gauss quadrature formula} if it integrates all polynomials $p\in\mathbb{P}_{2N+1}$ exactly, i.e., if
\begin{equation}\label{gauss}
\sum\limits_{j=0}^N\omega_jp(x_j)=\int_{-1}^1w(x)p(x)dx\quad\forall p\in\mathbb{P}_{2N+1}.
\end{equation}
$x_0,x_1,\ldots,x_N$ are called \emph{Gauss quadrature points}.
\end{definition}
\noindent Without causing confusion, we call $\mathcal{X}_{N+1}=\{x_j\}_{j=0}^N$ Gauss quadrature points. Throughout this paper, we always assume that $\mathcal{X}_{N+1}$ are Gauss quadrature points. It is well known (see, for example, \citet{powell1981approximation}, \citet{kress1998numerical} and \citet{gautschi2011numerical}) that Gauss quadrature points $\mathcal{X}_{N+1}$ are the zeros of the orthogonal polynomial of degree $N+1$. The orthogonality is with respect to the $L_2$ inner product
\begin{equation*}
(f,g)_{L_2}:=\int_{-1}^1w(x)f(x)g(x)dx.
\end{equation*}
This inner product induces the standard $L_2$ norm
\begin{equation*}
\|f\|_{L_2}:=\sqrt{(f,f)_{L_2}}=\left(\int_{-1}^1|f(x)|^2dx\right)^{1/2}.
\end{equation*}

Given a continuous function $f$ defined on $[-1,1]$, sampling on $\mathcal{X}_{N+1}$ generates
\begin{equation*}
{\rm \bf{f}}:={\rm \bf{f}}(\mathcal{X}_{N+1})=[f(x_0),f(x_1),\ldots.f(x_N)]^T\in\mathbb{R}^{N+1}.
\end{equation*}
Let ${\rm \bf{A}}_L:={\rm \bf{A}}_L(\mathcal{X}_{N+1})\in\mathbb{R}^{(N+1)\times(L+1)}$ be a matrix of orthogonal polynomials evaluated at $\mathcal{X}_{N+1}$:
\begin{equation*}
{\rm \bf{A}}_L=\left[\tilde{\Phi}_{\ell}(x_j)\right]\in\mathbb{R}^{(N+1)\times(L+1)},\quad j=0,1,\ldots,N,\quad\ell=0,1,\ldots,L.
\end{equation*}
By subtracting the structure \eqref{solutionp} of the approximation polynomial into the $\ell_2-$regularized approximation problem \eqref{problem:l2regu}, the problem \eqref{problem:l2regu} transforms into the following problem
\begin{equation}\label{problem:l2regumatrix}
\underset{{\bm \beta}\in \mathbb{R}^{L+1}}{\min}~~\|{\bf{W}}^{\frac12}({\rm \bf{A} }_L{\bm{\beta}}-{\rm \bf{f}})\|^2_2+\lambda\|{\rm \bf{R}}_L{\bm \beta}\|_2^2,\quad\lambda>0,
\end{equation}
where
\begin{equation*}
{\bf{W}}={\rm diag}(\omega_0,\omega_1,\ldots,\omega_N)\in\mathbb{R}^{(N+1)\times(N+1)},
\end{equation*}
and the diagonal matrix ${\rm \bf{R}}_L:={\rm diag}(\mu_0,\mu_1,\ldots,\mu_L)\in\mathbb{R}^{(L+1)\times(L+1)}$ is a semi-definite positive matrix.

With the same basis and weight vector as the $\ell_2-$regularized approximation problem \eqref{problem:l2regu} above, the $\ell_1-$regularized approximation problem \eqref{problem:l1regu} transforms into
\begin{equation}\label{problem:l1regumatrix}
\underset{{\bm \beta}\in \mathbb{R}^{L+1}}{\min}~~\|{\bf{W}}^{\frac12}({\rm \bf{A} }_L^T{\bm \beta}-{\rm \bf{f}})\|^2_2+\lambda\|{\rm \bf{R}}_L{\bm \beta}\|_1,\quad\lambda>0.
\end{equation}
Now the next step is to fix $\bm{\beta}=[\beta_0,\beta_1,\ldots,\beta_L]^T\in\mathbb{R}^{L+1}$.

The goal of this paper is to construct approximation polynomials in the form of \eqref{solutionp}. We specify coefficients $\{\beta_{\ell}\}_{\ell=0}^L$ by solving problems \eqref{problem:l2regumatrix} and \eqref{problem:l1regumatrix} directly. This immediately yields the $\ell_2-$\textit{regularized approximation polynomial} $p_{L,N+1}^{\ell_2}$ \eqref{p:l2} and the $\ell_1-$\textit{regularized approximation polynomial} $p_{L,N+1}^{\ell_1}$ \eqref{p:l1}.

As is known to all, Gauss quadrature goes hand in hand with the theory and computation of orthogonal polynomials, see \citet{gautschi2004orthogonal} and \citet{trefethen2013approximation} and references therein. Orthogonal polynomials occur in a wide range of applications and act as a remarkable role in pure and applied mathematics. Chebyshev polynomials and Legendre polynomials are two excellent factors in the family of orthogonal polynomials. Many polynomial approximation textbooks introduce fruitful results of Chebyshev and Legendre polynomials \citep{von1939orthogonal,powell1981approximation,gautschi2011numerical,trefethen2013approximation}. In particular, we take these two orthogonal polynomials (Chebyshev and Legendre) as representative examples in the choice of basis and Gauss quadrature points.

In the next section, we introduce some necessary notations and terminologies. The construction of the $\ell_2-$ and $\ell_1-$regularized minimizers to problems \eqref{problem:l2regu} and \eqref{problem:l1regu} are presented, respectively. The crucial fact is that both $p_{L,N+1}^{\ell_2}$ and $p_{L,N+1}^{\ell_1}$ could be presented in the barycentric form under an interpolation condition, see the $\ell_2-$regularized barycentric interpolation formula \eqref{L2regubary} and the $\ell_1-$regularized barycentric interpolation formula \eqref{L1regubary}. It is worth noting that the Wang-Xiang formula \citep{wang2012convergence} is a special case of $p_{L,N+1}^{\ell_2}$ when we set Legendre polynomials as the basis, see Section \ref{barycentricform}. In Section \ref{l2quality}, we study the quality of the approximation polynomial $p_{L,N+1}^{\ell_2}$ in terms of Lebesgue constants. We illustrate that Lebesgue constants decay when the regularization parameter increases. Section \ref{l1sparsity} analyses the $\ell_1-$regularized approximation problem \eqref{problem:l1regu} in the view of sparsity. In particular, we derive a sharp upper bound of nonzero entries in the solution to the $\ell_1-$regularized approximation problem \eqref{problem:l1regumatrix}. We consider, in Section \ref{numericalexperiments}, numerical experiments containing approximation with exact and contaminated data.

All numerical results\footnote{All codes are available at \\
\indent\indent https://github.com/HaoNingWu/Regularized-Least-Squares-Approximation-using-Orthogonal-Polynomials.} in this paper are carried out by using MATLAB R2017A on a desktop (8.00 GB RAM, Intel(R) Processor 5Y70 at 1.10 GHz and 1.30 GHz) with the Windows 10 operating system.

\section{Regularized weighted least squares approximation}
The construction of minimizers to problems \eqref{problem:l2regu} and \eqref{problem:l1regu} is presented in this section.
\subsection{$\ell_2-$regularized approximation problem}\label{sec:l2}
First, we consider solving the $\ell_2$-regularized weighted discrete least squares approximation problem (the \textit{$\ell_2$-regularized approximation problem}) \eqref{problem:l2regu}. The problem can be transformed into a convex and differential optimization problem \eqref{problem:l2regumatrix}.

Taking the first derivative of the objective function in the $\ell_2-$regularized approximation problem \eqref{problem:l2regumatrix} in the matrix form with respect to $\bm{\beta}$ leads to the first order condition
\begin{equation}\label{wproblem:l22w}
\left({\rm \bf{A}}_L^T{\bf{W}}{\rm \bf{A}}_L+\lambda{\rm \bf{R}}_L^T{\rm \bf{R}}_L\right){\bm \beta}={\rm \bf{A}}_L^T{\bf{W}}{\rm \bf{f}},~~~~\lambda>0.
\end{equation}
One may solve the first order condition \eqref{wproblem:l22w} using methods of numerical linear algebra; however, in this paper we concentrate on how to obtain the solution to the first order condition \eqref{wproblem:l22w} in a closed form.
\begin{lemma}\label{thm:diagonal}
Let $\{\tilde{\Phi}_{\ell}\}_{\ell=0}^L$ be a class of normalized orthogonal polynomials with the weight function $w(x)$ over $[-1,1]$, and $\mathcal{X}_{N+1}=\{x_0,x_1,\ldots,x_N\}$ be the set of zeros of $\tilde{\Phi}_{N+1}$. Assume $2L\leq2N+1$ and ${\rm \bf{w}}$ is a vector of weights satisfying the Gauss quadrature formula \eqref{gauss}. Then
\begin{equation*}
{\rm \bf{H}}_L:={\rm \bf{A}}_L^T{\bf{W}}{\rm \bf{A}}_L={\rm \bf{I}}_{L}\in\mathbb{R}^{(L+1)\times(L+1)},
\end{equation*}
where ${\rm\bf{I}}_L$ is the identity matrix.
\end{lemma}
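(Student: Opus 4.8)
The plan is to prove the identity entrywise, exploiting the fact that $\mathbf{H}_L$ is the Gram matrix of the basis $\{\tilde{\Phi}_{\ell}\}_{\ell=0}^L$ computed in the \emph{discrete} inner product induced by the Gauss rule, and then to match that discrete inner product with the continuous $L_2$ inner product in which the basis is orthonormal. Writing out the triple matrix product and using that $\mathbf{\Lambda}=\mathrm{diag}(\omega_0,\ldots,\omega_N)$ is diagonal, the $(\ell,m)$ entry is
\begin{equation*}
(\mathbf{H}_L)_{\ell m}=\sum_{j=0}^N \omega_j\,\tilde{\Phi}_{\ell}(x_j)\,\tilde{\Phi}_{m}(x_j),\qquad 0\le \ell,m\le L.
\end{equation*}
I would immediately recognize the right-hand side as the Gauss quadrature sum applied to the single polynomial $\tilde{\Phi}_{\ell}\tilde{\Phi}_{m}$.

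The key step is a degree count: the product $\tilde{\Phi}_{\ell}\tilde{\Phi}_{m}$ is a polynomial of degree $\ell+m$, and provided this does not exceed the Gauss degree of exactness $2N+1$, the exactness property \eqref{gauss} of the Gauss points set $\mathcal{X}_{N+1}$ converts the sum into the exact integral,
\begin{equation*}
(\mathbf{H}_L)_{\ell m}=\sum_{j=0}^N \omega_j\,\tilde{\Phi}_{\ell}(x_j)\,\tilde{\Phi}_{m}(x_j)=\int_{-1}^1 \tilde{\Phi}_{\ell}(x)\,\tilde{\Phi}_{m}(x)\,d\omega(x)=(\tilde{\Phi}_{\ell},\tilde{\Phi}_{m})_{L_2}.
\end{equation*}
By the orthonormality of $\{\tilde{\Phi}_{\ell}\}$ with respect to $(\cdot,\cdot)_{L_2}$ we have $(\tilde{\Phi}_{\ell},\tilde{\Phi}_{m})_{L_2}=\delta_{\ell m}$, so every diagonal entry equals $1$ and every off-diagonal entry vanishes; hence $\mathbf{H}_L=\mathbf{I}_L$.

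The step that requires genuine care—and that carries the real content of the hypothesis—is securing the inequality $\ell+m\le 2N+1$ uniformly over the admissible index range. The worst case is the diagonal term $\tilde{\Phi}_L^2$, of degree $2L$, where exactness demands $2L\le 2N+1$; I would therefore verify that the standing assumptions pin the degree down tightly enough for this to hold for all $\ell,m$ simultaneously. A useful sanity check pointing to the same restriction is the rank count: $\mathbf{A}_L$ is $(N+1)\times(L+1)$, so it must have full column rank $L+1\le N+1$ even for $\mathbf{H}_L$ to be invertible, let alone equal to the identity. Once the degree bound is established throughout, the conclusion follows at once from orthonormality, with no linear-algebraic computation beyond the entrywise Gauss-exactness argument.
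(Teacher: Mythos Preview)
Your approach is essentially identical to the paper's: write the $(\ell,m)$ entry of $\mathbf{H}_L$ as the Gauss-quadrature sum for $\tilde{\Phi}_\ell\tilde{\Phi}_m$, invoke exactness of the rule to replace it by the continuous $L_2$ inner product, and conclude by orthonormality. Your explicit degree count is in fact more careful than the paper's own proof, which simply asserts exactness without checking; you are right to flag that what the argument actually needs is $2L\le 2N+1$ (equivalently $L\le N$), which is stricter than the lemma's stated hypothesis $L\le 2N+1$.
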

\begin{proof}
By the structure of the matrix ${\rm \bf{H}}_L$ and the definition of Gauss quadrature formula (see \eqref{gauss}), we obtain
\begin{equation*}\begin{split}
({\rm \bf{H}}_L)_{\ell,\ell'} = \sum_{j=0}^N\omega_j\tilde{\Phi}_{\ell}(x_j)\tilde{\Phi}_{\ell'}(x_j)= \int_{-1}^1w(x)\tilde{\Phi}_{\ell}(x)\tilde{\Phi}_{\ell'}(x)dx=\delta_{\ell,\ell'},
\end{split}
\end{equation*}
where $\delta_{\ell,\ell'}$ is the Kronecker delta. The middle equality holds from $\tilde{\Phi}_{\ell}(x)\tilde{\Phi}_{\ell'}(x)\in\mathbb{P}_{2L}\subset\mathbb{P}_{2N+1}$, and the last equality holds because of the orthonormality of $\{\tilde{\Phi}_{\ell}\}_{\ell=0}^L$.
\end{proof}
\begin{theorem}\label{thm:l2}
Under the condition of Lemma \ref{thm:diagonal}, the optimal solution to the $\ell_2-$regularized approximation problem \eqref{problem:l2regumatrix} in the matrix form can be expressed by
\begin{equation}\label{beta:l2}
\beta_{\ell}=\frac{1}{1+\lambda\mu_{\ell}^2}\sum_{j=0}^N\omega_j\tilde{\Phi}_{\ell}(x_j)f(x_j),\quad \ell=0,1,\ldots,L,\quad\lambda>0.
\end{equation}
Consequently, the $\ell_2-$regularized approximation polynomial is
\begin{equation}\label{p:l2}
p_{L,N+1}^{\ell_2}(x) =\sum_{\ell=0}^L\frac{\tilde{\Phi}_{\ell}(x)}{1+\lambda\mu_{\ell}^2}\sum_{j=0}^N\omega_j\tilde{\Phi}_{\ell}(x_j)f(x_j).
\end{equation}
\end{theorem}
\begin{proof}
This is immediately obtained from the first order condition \eqref{wproblem:l22w} of the problem \eqref{problem:l2regumatrix} and Lemma \ref{thm:diagonal}.
\end{proof}

In the limiting case $N\rightarrow\infty$, we obtain the following simple but significant result.
\begin{theorem}\label{l2convergence}
Adopt the notation and assumptions of Lemma \ref{thm:diagonal}.
Let $f\in{\mathcal{C}}([-1,1])$, and let $L\geq0$ be given. Then the polynomial $p_{L,N+1}^{\ell_2}$ \eqref{p:l2} has the uniform limit $p_L^*$ as $N\rightarrow\infty$, that is
\begin{equation*}
\lim\limits_{N\rightarrow\infty}\|p_{L,N+1}^{\ell_2}-p_L^*\|_{\infty}=0,
\end{equation*}
where \begin{equation}\label{cbeta:l2}
p_L^*(x)=\sum\limits_{\ell=0}^L\frac{\tilde{\Phi}_{\ell}(x)}{1+\lambda\mu_{\ell}^2}\int_{-1}^1w(x)\tilde{\Phi}_{\ell}(x)f(x)dx.
\end{equation}
\end{theorem}
\begin{proof}
Since the interval $[-1,\,1]$ is a compact set, and since the sums over $\ell$ in \eqref{beta:l2} and \eqref{cbeta:l2} are finite, to prove the theorem, it is sufficient to prove that
\begin{equation}\label{limit}
\lim\limits_{N\rightarrow\infty}\sum_{j=0}^N\omega_j\tilde{\Phi}_{\ell}(x_j)f(x_j)=\int_{-1}^1w(x)\tilde{\Phi}_{\ell}(x)f(x)dx,\quad 0\leq\ell\leq L.
\end{equation}
Since $w(x)\tilde{\Phi}_{\ell}(x)f(x)\in \mathcal{C}([-1,\,1])$, the result follows from that, the sequence of Gauss quadrature formulae is convergent \citep[Chapter 9]{kress1998numerical}. Hence \eqref{limit} holds, proving the whole theorem.
\end{proof}

\subsection{$\ell_1-$regularized approximation problem}\label{sec:l1}
Now we discuss the $\ell_1-$regularized approximation problem \eqref{problem:l1regu}, but we still convert it into solving the problem \eqref{problem:l1regumatrix} in a matrix form.
To solve this problem, we first define the soft threshold operator $S_k(a)$.
\begin{definition}[\citealp{donoho1994ideal}]
The \textit{soft threshold operator}, denoted as $S_k(a)$, is defined as
\begin{equation*}\label{equ:soft}
S_k(a)=\max(0,a-k)+\min(0,a+k).
\end{equation*}
\end{definition}
\begin{theorem}\label{prop:l1}
Adopt the notation and assumptions of Lemma \ref{thm:diagonal}. Then the $\ell_1-$regularized approximation problem \eqref{problem:l1regumatrix} in the matrix form has the unique closed-form solution
\begin{equation}\label{beta:l1}
\beta_{\ell}=\frac{1}{2}S_{\lambda\mu_{\ell}}(2\alpha_{\ell}),\quad \ell=0,1,\ldots,L,
\end{equation}
where $\alpha_{\ell}=\sum_{j=0}^N\omega_j\tilde{\Phi}_{\ell}(x_j)f(x_j)$. Consequently, the $\ell_1-$regularized approximation polynomial is
\begin{equation}\label{p:l1}
p_{L,N+1}^{\ell_1}(x)=\frac{1}{2}\sum\limits_{\ell=0}^N{S_{\lambda\mu_{\ell}}\left(2\alpha_{\ell}\right)}\tilde{\Phi}_{\ell}(x).
\end{equation}
\end{theorem}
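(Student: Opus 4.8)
The plan is to use the orthogonality from Lemma~\ref{thm:diagonal} to decouple the problem coordinate-by-coordinate, reducing \eqref{problem:l1regumatrix} to $L+1$ independent scalar minimizations, each solved explicitly by soft-thresholding. First I would expand the quadratic data-fidelity term (which is the same as in the $\ell_2$ problem \eqref{problem:l2regumatrix}). Denoting the objective by $F(\bm\beta)$, the fidelity part equals $(\mathbf{A}_L\bm\beta-\mathbf{f})^T\bm\Lambda(\mathbf{A}_L\bm\beta-\mathbf{f})=\bm\beta^T(\mathbf{A}_L^T\bm\Lambda\mathbf{A}_L)\bm\beta-2\bm\beta^T\mathbf{A}_L^T\bm\Lambda\mathbf{f}+\mathbf{f}^T\bm\Lambda\mathbf{f}$. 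By Lemma~\ref{thm:diagonal} the Gram matrix in the middle is $\mathbf{I}_L$, so, setting $\bm\alpha:=\mathbf{A}_L^T\bm\Lambda\mathbf{f}$ (whose entries are exactly the $\alpha_\ell$ of the statement), the fidelity term collapses to $\|\bm\beta\|_2^2-2\bm\beta^T\bm\alpha$ up to the $\bm\beta$-independent constant $\mathbf{f}^T\bm\Lambda\mathbf{f}$. Since $\mathbf{R}_L$ is diagonal, $\|\mathbf{R}_L\bm\beta\|_1=\sum_{\ell}\mu_\ell|\beta_\ell|$, and therefore
\begin{equation*}
F(\bm\beta)=\sum_{\ell=0}^L\big(\beta_\ell^2-2\alpha_\ell\beta_\ell+\lambda\mu_\ell|\beta_\ell|\big)+\mathbf{f}^T\bm\Lambda\mathbf{f}.
\end{equation*}

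The decisive observation is that this objective is \emph{separable}: it is a sum of functions each depending on a single coordinate $\beta_\ell$. Hence minimizing $F$ amounts to minimizing each scalar function $g_\ell(t)=t^2-2\alpha_\ell t+\lambda\mu_\ell|t|$ independently over $t\in\mathbb{R}$. Each $g_\ell$ is strictly convex (the $t^2$ term dominates) and coercive, so it admits a unique minimizer, which I would locate through the first-order optimality condition $0\in\partial g_\ell(t)$, that is $2t-2\alpha_\ell+\lambda\mu_\ell\,\partial|t|\ni 0$. Working through the three cases $t>0$, $t<0$, and $t=0$ yields the minimizer $t^\star=S_{\lambda\mu_\ell/2}(\alpha_\ell)$; by the positive homogeneity of the threshold operator, $S_{\lambda\mu_\ell}(2\alpha_\ell)=2\,S_{\lambda\mu_\ell/2}(\alpha_\ell)$, so $t^\star=\tfrac12 S_{\lambda\mu_\ell}(2\alpha_\ell)$, which is precisely \eqref{beta:l1}. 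Substituting the resulting coefficients into \eqref{solutionp} gives \eqref{p:l1}.

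The only delicate point — and the step I expect to require the most care — is the non-differentiability of $|t|$ at the origin in each scalar subproblem. This is exactly where the thresholding behavior (the possibility $\beta_\ell=0$) originates, so rather than naive differentiation I would argue via the subdifferential of the absolute value, $\partial|t|=\{\mathrm{sgn}(t)\}$ for $t\neq 0$ and $\partial|0|=[-1,1]$; the condition $0\in\partial g_\ell(0)$ then reads $|\alpha_\ell|\le\lambda\mu_\ell/2$ and produces the dead zone of the soft-threshold map. The uniqueness claimed in the theorem follows immediately from the strict convexity of each $g_\ell$, and the identity $\tfrac12 S_{\lambda\mu_\ell}(2\alpha_\ell)=S_{\lambda\mu_\ell/2}(\alpha_\ell)$ is a one-line verification from the definition of $S_k$.
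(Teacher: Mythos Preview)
Your proposal is correct and follows essentially the same approach as the paper: both invoke Lemma~\ref{thm:diagonal} to reduce the first-order optimality condition to independent scalar subdifferential inclusions $0\in 2\beta_\ell-2\alpha_\ell+\lambda\mu_\ell\,\partial|\beta_\ell|$, and then resolve each by a three-case analysis yielding the soft-threshold formula. Your explicit demonstration of separability of the objective and your appeal to strict convexity for uniqueness are modest expository improvements over the paper's version, but the underlying argument is the same.
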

The method of the proof is similar to the proof of \citet[Theorem 5.1]{zhou2018spherical}. But we explain that our regularized least squares approximation problem \eqref{problem:l1regu} is over the interval $[-1,1]$ rather than over the unit sphere.

\begin{proof}
Since ${\rm \bf{H}}_L$ is non-singular, for the $\ell_1-$regularized approximation problem \eqref{problem:l1regumatrix} in the matrix form, its first order condition is
\begin{equation}\label{l1deri}
{\bf{0}}\in2{\rm \bf{H}}_L\bm{\beta}-2{\rm \bf{A}}_L^T{\bf{W}}{\rm \bf{f}}+\lambda\partial(\|{\rm \bf{R}}_L\bm{\beta}\|_1),
\end{equation}
where $\partial(\cdot)$ denotes the subgradient \citep{clarke1990optimization}. Since ${\rm \bf{H}}_{L}={\rm \bf{I}}_{L}$ is an identity matrix and ${\rm \bf{R}}_L$ is a diagonal matrix, $\bm{\beta}$ is the solution to the $\ell_1-$regularized approximation problem \eqref{problem:l1regumatrix} in the matrix form if and only if
\begin{equation}\label{equ:optimal}
0\in2\beta_{\ell}-2\alpha_{\ell}+\lambda\mu_{\ell}\partial|\beta_{\ell}|,\quad \ell=0,1,\ldots,L,
\end{equation}
where $-1\leq\partial|\beta_{\ell}|\leq1$. Let $\beta_{\ell}^*$ be the optimal solution to the problem \eqref{equ:optimal}, then
\begin{equation*}
\beta_{\ell}^*=\frac{1}{2}(2\alpha_{\ell}-\lambda\mu_{\ell}\partial|\beta_{\ell}^*|),\quad \ell=0,1,\ldots,L.
\end{equation*}
Next there exist three cases to be considered:
\begin{enumerate}
  \item If $2\alpha_{\ell}>\lambda\mu_{\ell}$, then $2\alpha_{\ell}-\lambda\mu_{\ell}\partial|\beta_{\ell}^*|>0$, thus $\beta_{\ell}^*>0$, yielding $\partial|\beta_{\ell}^*|=1$, then
      \begin{equation*}
      \beta_{\ell}^*=\frac{1}{2}(2\alpha_{\ell}-\lambda\mu_{\ell})>0.
      \end{equation*}
  \item If $2\alpha_{\ell}<-\lambda\mu_{\ell}$, then $2\alpha_{\ell}+\lambda\mu_{\ell}\partial|\beta_{\ell}^*|<0$, thus $\beta_{\ell}^*<0$, giving $\partial|\beta_{\ell}^*|=-1$, then
      \begin{equation*}
      \beta_{\ell}^*=\frac{1}{2}(2\alpha_{\ell}+\lambda\mu_{\ell})<0.
      \end{equation*}
  \item If $-\lambda\mu_{\ell}\leq2\alpha_{\ell}\leq\lambda\mu_{\ell}$, then on the one hand, $\beta_{\ell}^*>0$ leads to $\partial|\beta_{\ell}|=1$, and thus $\beta_{\ell}^*\leq0$, on the other hand, $\beta_{\ell}^*<0$ produces $\partial|\beta_{\ell}|=-1$, and hence $\beta_{\ell}^*\geq0$. Thus
      \begin{equation*}
      \beta_{\ell}^*=0.
      \end{equation*}
\end{enumerate}
As we hoped, with the aid of soft threshold operator, we obtain
\begin{equation*}\begin{split}
\beta_{\ell}^* &= \frac{1}{2}(\max(0,2\alpha_{\ell}-\lambda\mu_{\ell})+\min(0,2\alpha_{\ell}+\lambda\mu_{\ell}))\\
&=\frac{1}{2}S_{\lambda\mu_{\ell}}(2\alpha_{\ell}).
\end{split}
\end{equation*}
\end{proof}

\subsection{Regularized barycentric interpolation formulae}\label{barycentricform}
In this subsection, we focus on the condition of $L=N$ and the required interpolation conditions
\begin{equation*} p(x_j)=f(x_j),\quad j=0,1,\ldots,N ,\end{equation*}
where $p\in\mathbb{P}_L$ is the interpolant of $f$. As pointed out by \citet*{wang2014explicit}, ``barycentric interpolation is arguably the method of choice for numerical polynomial interpolation". It is significant to express the $\ell_2-$regularized approximation polynomial \eqref{p:l2} and the $\ell_2-$regularized approximation polynomial \eqref{p:l1} in a barycentric form \citep{berrut2004barycentric,higham2004numerical}
\begin{equation*}
p(x)=\dfrac{\sum\limits_{j=0}^N\dfrac{\Omega_j}{x-x_j}f(x_j)}{\sum\limits_{j=0}^N\dfrac{\Omega_j}{x-x_j}},\quad \Omega_j=\frac{1}{\prod_{k\neq j}(x_k-x_j)},
\end{equation*}
respectively. $\{\Omega_j\}_{j=0}^N$ are called barycentric weights.
The study of the $\{\Omega_j\}_{j=0}^N$ for roots and extrema of classical orthogonal polynomials is well developed, see \citet{salzer1972lagrangian}, \citet{schwarz1989numerical}, \citet{berrut2004barycentric}, \citet{wang2012convergence} and \citet{wang2014explicit}.

We first derive the $\ell_2-$regularized barycentric interpolation formula. Recall \eqref{p:l2}, we have
\begin{align}\label{minimizer1}
p_{L,N+1}^{\ell_2}(x) &= \sum\limits_{\ell=0}^N\dfrac{\sum_{j=0}^N\omega_j\tilde{\Phi}_{\ell}(x_j)f(x_j)}{1+\lambda\mu_{\ell}^2}\tilde{\Phi}_{\ell}(x) \notag \\
&=\sum_{j=0}^N\omega_jf(x_j)\sum_{\ell=0}^N\dfrac{\tilde{\Phi}_{\ell}(x_j)\tilde{\Phi}_{\ell}(x)}{1+\lambda\mu_{\ell}^2}.
\end{align}
From the orthonormality of $\tilde{\Phi}_{\ell}(x)$, $\ell=0,1,\ldots,N$, by letting $f(x)\equiv1$ we have
\begin{equation*}
\begin{split}
\sum\limits_{j=0}^N\omega_j\sum_{\ell=0}^N\dfrac{\tilde{\Phi}_{\ell}(x_j)\tilde{\Phi}_{\ell}(x)}{1+\lambda\mu_{\ell}^2}
&=\sum_{\ell=0}^N\left(\sum_{j=0}^N\omega_j\tilde{\Phi}_{\ell}(x_j)\cdot1\right)\dfrac{\tilde{\Phi}_{\ell}(x)}{1+\lambda\mu_{\ell}^2}\\
&=\|\tilde{\Phi}_0(x)\|_{L_2}\dfrac{\tilde{\Phi}_{0}(x)}{1+\lambda\mu_{0}^2}=\dfrac{1}{1+\lambda\mu_{0}^2}.
\end{split}
\end{equation*}
Note that
\begin{equation*}
1\neq\sum\limits_{j=0}^N\omega_j\sum_{\ell=0}^N\dfrac{\tilde{\Phi}_{\ell}(x_j)\tilde{\Phi}_{\ell}(x)}{1+\lambda\mu_{\ell}^2}\quad\text{when}\quad\lambda\mu_{\ell}^2\neq0,
\end{equation*}
due to the existence of regularization. Then the $\ell_2-$regularized approximation polynomial \eqref{minimizer1} can be expressed as
\begin{equation}\label{l2frac}
p_{L,N+1}^{\ell_2}(x)=\dfrac{\sum\limits_{j=0}^N\left(\omega_j\sum\limits_{\ell=0}^N\dfrac{\tilde{\Phi}_{\ell}(x_j)\tilde{\Phi}_{\ell}(x)}{1+\lambda\mu_{\ell}^2}\right)f(x_j)}{(1+\lambda\mu_0^2)\sum\limits_{j=0}^N\omega_j\sum\limits_{\ell=0}^N\dfrac{\tilde{\Phi}_{\ell}(x_j)\tilde{\Phi}_{\ell}(x)}{1+\lambda\mu_{\ell}^2}}.
\end{equation}
Without loss of generality, assume $\mu_{\ell}=1$ for $\ell\geq N+1$. Note that under this assumption, $\left\{\frac{\tilde{\Phi}_{\ell}(x)}{\sqrt{1+\lambda\mu_{\ell}^2}}\right\}_{\ell\in\mathbb{N}}$ is still a sequence of orthogonal polynomials. By the Christoffel-Darboux formula \citep[Section 1.3.3]{gautschi2004orthogonal}, we rewrite $\sum_{\ell=0}^N\frac{\tilde{\Phi}_{\ell}(x_j)\tilde{\Phi}_{\ell}(x)}{1+\lambda\mu_{\ell}^2}$ in the form
\begin{align}
\sum\limits_{\ell=0}^N\dfrac{\tilde{\Phi}_{\ell}(x)\tilde{\Phi}_{\ell}(x_j)}
{1+\lambda\mu_{\ell}^2}&=\nonumber\dfrac{k_N}{h_Nk_{N+1}}\dfrac{\dfrac{\tilde{\Phi}_{N+1}(x)}
{\sqrt{1+\lambda\mu_{N+1}^2}}\dfrac{\tilde{\Phi}_{N}(x_j)}{\sqrt{1+\lambda\mu_{N}^2}}-\dfrac{\tilde{\Phi}_{N+1}(x_j)}
{\sqrt{1+\lambda\mu_{N+1}^2}}\dfrac{\tilde{\Phi}_{N}(x)}{\sqrt{1+\lambda\mu_{N}^2}}}{x-x_j}\\
&=\dfrac{k_N}{h_Nk_{N+1}}\dfrac{\tilde{\Phi}_{N+1}(x)\tilde{\Phi}_{N}(x_j)}{\sqrt{1+\lambda\mu_{N+1}^2}\sqrt{1+\lambda\mu_{N}^2}(x-x_j)}\label{CDformual},
\end{align}
where $k_{\ell}$ and $h_{\ell}$ denote the leading coefficient and the $L_2$ norm of $\frac{\tilde{\Phi}_{\ell}(x)}{\sqrt{1+\lambda\mu_{\ell}^2}}$, respectively.
Let us combine \eqref{CDformual} with the other expression \eqref{l2frac} of the $\ell_2-$regularized approximation polynomial and cancel the factor $ \dfrac{k_N}{h_Nk_{N+1}}\dfrac{\tilde{\Phi}_{N+1}(x)}{\sqrt{1+\lambda\mu_{N+1}^2}\sqrt{1+\lambda\mu_{N}^2}}$ from both the numerator and the denominator. Together with \eqref{l2frac} and \eqref{CDformual} we obtain the solution to the $\ell_2-$regularized approximation problem in a barycentric form, and we name it the \textit{$\ell_2-$regularized barycentric interpolation formula}:
\begin{equation}\label{L2regubary}
p_{N}^{\ell_2-\text{bary}}(x)=\dfrac{\sum\limits_{j=0}^N\dfrac{\Omega_j}{x-x_j}f(x_j)}{(1+\lambda\mu_{0}^2)\sum\limits_{j=0}^N\dfrac{\Omega_j}{x-x_j}},
\end{equation}
where $\Omega_j=\omega_j\tilde{\Phi}_{N}(x_j)$ is the corresponding barycentric weight at $x_j$. This relation between barycentric weights and Gauss quadrature weights is revealed by \citet*{wang2014explicit}; however, this relation does not lead to the fast computation since it still requires evaluating orthogonal polynomials on $\mathcal{X}_{N+1}$. From the relation they also find the explicit barycentric weights for all classical orthogonal polynomials.

Secondly, we induce the \textit{$\ell_1-$regularized barycentric interpolation formula}.
The $\ell_1-$regularized approximation polynomial \eqref{p:l1} can be expressed as the sum of two terms:
\begin{align}\label{l1difference}
p_{L,N+1}^{\ell_{1}}(x)&=\sum\limits_{\ell=0}^N\frac{S_{\lambda\mu_{\ell}}\left(2\sum\limits_{j=0}^N\omega_j\tilde{\Phi}_{\ell}(x_j)f(x_j)\right)}{2}\tilde{\Phi}_{\ell}(x) \notag \\
&=\sum\limits_{\ell=0}^N\left(\sum_{j=0}^N\omega_j\tilde{\Phi}_{\ell}(x_j)f(x_j)\right)\tilde{\Phi}_{\ell}(x)+\sum_{\ell=0}^Nc_{\ell}\tilde{\Phi}_{\ell}(x),
\end{align}
where
\begin{equation*}
c_{\ell}=\frac{S_{\lambda\mu_{\ell}}(2\alpha_{\ell})}{2}-\alpha_{\ell},\quad \ell=0,1,\ldots,N.
\end{equation*}
The first term in \eqref{l1difference} can be directly written in the barycentric form by letting $\lambda=0$ using the $\ell_2-$regularized barycentric formula. Then let the basis $\{\tilde{\Phi}_{\ell}\}_{\ell=0}^N$ transform into Lagrange polynomials $\{\ell_j(x)\}_{j=0}^N$. By the basis-transformation relation between orthogonal polynomials and Lagrange polynomials \citep{gander2005change}, the second term in \eqref{l1difference} can be represented by Lagrange polynomials in the form
\begin{equation}\label{l1lagrange}
\sum_{\ell=0}^Nc_{\ell}\tilde{\Phi}_{\ell}(x)=\sum_{j=0}^N\left(\sum_{\ell=0}^Nc_{\ell}\tilde{\Phi}_{\ell}(x_j)\right)\ell_j(x).
\end{equation}
With the same procedure of obtaining the barycentric formula from the classical Lagrange interpolation formula in \citet{berrut2004barycentric}, the second term \eqref{l1lagrange} in \eqref{l1difference} can be rewritten as
\begin{equation}\label{additionalterm}
\sum_{\ell=0}^Nc_{\ell}\tilde{\Phi}_{\ell}(x)=\dfrac{\sum\limits_{j=0}^N\dfrac{\Omega_j}{x-x_j}\left(\sum\limits_{\ell=0}^Nc_{\ell}\tilde{\Phi}_{\ell}(x_j)\right)}{\sum\limits_{j=0}^N\dfrac{\Omega_j}{x-x_j}}.
\end{equation}
Together with \eqref{l1difference} and \eqref{additionalterm}, we obtain the \textit{$\ell_1-$regularized barycentric interpolation formula}:
\begin{equation}\label{L1regubary}
p_{N}^{\ell_1-\text{bary}}(x)=\dfrac{\sum\limits_{j=0}^N\dfrac{\Omega_j}{x-x_j}\left(f(x_j)+\sum\limits_{\ell=0}^Nc_{\ell}\tilde{\Phi}_{\ell}(x_j)\right)}{\sum\limits_{j=0}^N\dfrac{\Omega_j}{x-x_j}}.
\end{equation}

If $\lambda=0$, the basis is normalized Legendre polynomials (i.e., interpolation nodes are Legendre points) and $\Omega_j=(-1)^j\sqrt{(1-x_j^2)\omega_j}$ where $\omega_j$ is the Gauss quadrature weight at $x_j$ \citep{wang2012convergence}, then both the $\ell_2-$regularized barycentric interpolation formula \eqref{L2regubary} and the $\ell_1-$regularized barycentric interpolation formula \eqref{L1regubary} reduce to the Wang-Xiang formula \citep{wang2012convergence,trefethen2013approximation}. Inspired by the work of \citet{higham2004numerical}, we will conduct numerical studies on both regularized barycentric interpolation formulae \eqref{L2regubary} and \eqref{L1regubary}, such as numerical stability, see the next paper \citep{an2019the}.

\section{Quality of $\ell_2-$regularized weighted least squares approximation}\label{l2quality}
In this section, we study the quality of the $\ell_2-$regularized weighted least squares approximation in terms of Lebesgue constants. As is known to all, the Lebesgue constant is a tool for quantifying the divergence or convergence of polynomial approximation. From 1910, a lot of works have been done on Lebesgue constants \citep{fejer1910lebesguessche,von1939orthogonal,rivlin1980introduction,powell1981approximation,wang2012convergence,trefethen2013approximation}. This paper considers Lebesgue constants in the case of regularization.
The Lebesgue constant is the $\infty-$norm of the linear mapping from data to approximation polynomial:
\begin{equation*}
{\rm \Lambda}_L:=\underset{f\neq 0}\sup\frac{\|p_{L}\|_{\infty}}{\|f\|_{\infty}}.
\end{equation*}
In particular, we have the following estimation on the $\Lambda_L$ as a consequence of \eqref{p:l2}.
\begin{proposition}
Adopt the notation and assumptions of Theorem \ref{thm:l2}. We have
\begin{align}
{\rm \Lambda}_L=&\underset{x\in[-1,\,1]}\max \sum_{j=1}^{N}w_j\Big| \sum_{\ell=0}^{L}\frac{1}{1+\lambda\mu_{\ell}^2}\tilde{\Phi}_{\ell}(x)\tilde{\Phi}_{\ell}(x_j)\Big|
\nonumber \\  \leq &\underset{x\in[-1,\,1]}\max \sum_{j=0}^{N}w_j \sum_{\ell=0}^{L}\frac{1}{1+\lambda\mu_{\ell}^2}\Big|\tilde{\Phi}_{\ell}(x)\tilde{\Phi}_{\ell}(x_j)\Big|.
\end{align}
\end{proposition}

\subsection{Lebesgue constants with the basis of Chebyshev polynomials of the first kind}
We mimic the discussion of the least squares approximation without regularization in
\citet[Section 2.4]{rivlin1980introduction}. We shall treat the case of normalized Chebyshev polynomials of the first kind $T_{\ell}(x),\,\ell=0,1,\ldots,$ as the basis for $\mathbb{P}_L$. Primary results are also available in \citet{powell1967maximum}. Consider a weighted Fourier series of a given continuous function $g(\theta)$ over $[-\pi,\pi]$:
\begin{equation*}
q_L(\theta)=\frac{\rho_{0,L}}{2}a_0+\sum_{\ell=1}^L\rho_{\ell,L}(a_{\ell}\cos\ell\theta+b_{\ell}\sin\ell\theta),
\end{equation*}
where $a_0,a_1,\ldots,a_L,b_1,\ldots,b_L$ are Fourier coefficients defined as
\begin{equation*}
a_{\ell}=\frac{1}{\pi}\int_{-\pi}^{\pi}g(t)\cos\ell tdt,\quad \ell=0,1,\ldots,L,
\end{equation*}
and
\begin{equation*}
b_{\ell}=\frac{1}{\pi}\int_{-\pi}^{\pi}g(t)\sin\ell tdt,\quad \ell=1,\ldots,L,
\end{equation*}
and weights $\rho_{\ell,L}=1/(1+\lambda\mu_{\ell}^2)$, $\ell=0,1,\ldots,L$.

\begin{lemma}[\citealp{rivlin1980introduction}]\label{qtheta}
If $g(\theta)$ is continuous on $[-\pi,\pi]$ with period $2\pi$, then
\begin{equation*}
q_L(\theta)=\frac{1}{\pi}\int_{-\pi}^{\pi}g(t+\theta)u_L(t)dt,
\end{equation*}
where
\begin{equation*}
u_L(t)=\frac{\rho_{0,L}}{2}+\sum_{\ell=1}^L\rho_{\ell,L}\cos\ell t.
\end{equation*}
\end{lemma}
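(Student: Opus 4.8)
The plan is to substitute the integral representations of the Fourier coefficients $a_\ell$ and $b_\ell$ directly into the definition of $q_L(\theta)$, collapse the resulting finite sum against a single integral by means of a trigonometric addition identity, and finally perform a shift of the integration variable that is justified by $2\pi$-periodicity.

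First I would replace each $a_\ell$ and $b_\ell$ by its defining integral and pull the common factor $1/\pi$ outside. Since the sum over $\ell$ is finite, I may interchange it freely with the integral, obtaining
\begin{equation*}
q_L(\theta)=\frac{1}{\pi}\int_{-\pi}^{\pi}g(t)\left(\frac{\rho_{0,L}}{2}+\sum_{\ell=1}^L\rho_{\ell,L}\bigl(\cos\ell\theta\cos\ell t+\sin\ell\theta\sin\ell t\bigr)\right)dt.
\end{equation*}
The key algebraic step is the addition formula $\cos\ell\theta\cos\ell t+\sin\ell\theta\sin\ell t=\cos\ell(t-\theta)$, which turns the bracketed kernel into $u_L(t-\theta)$. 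This gives $q_L(\theta)=\frac{1}{\pi}\int_{-\pi}^{\pi}g(t)u_L(t-\theta)\,dt$.

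It then remains to substitute $s=t-\theta$. This sends the integration interval to $[-\pi-\theta,\pi-\theta]$, but the integrand $g(s+\theta)u_L(s)$ is $2\pi$-periodic in $s$: $g$ is periodic by hypothesis, and $u_L$ is a trigonometric polynomial in $\cos\ell s$, hence periodic as well. Because the integral of a $2\pi$-periodic function over any interval of length $2\pi$ is independent of its endpoints, I may restore the interval to $[-\pi,\pi]$, which yields exactly $q_L(\theta)=\frac{1}{\pi}\int_{-\pi}^{\pi}g(t+\theta)u_L(t)\,dt$ after renaming $s$ to $t$.

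The computation is elementary, so I do not anticipate a serious obstacle; the only point requiring genuine care is the periodicity argument used to restore the symmetric interval after the change of variables, since without it the limits would depend on $\theta$ and the stated clean form would fail. I would therefore record the $2\pi$-periodicity of both $g$ and $u_L$ explicitly before invoking the shift.
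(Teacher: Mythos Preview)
Your argument is correct and is the standard derivation: insert the Fourier coefficients, use the cosine addition formula to recognize $u_L(t-\theta)$, then shift the integration variable and invoke $2\pi$-periodicity to restore the symmetric interval. The paper does not give its own proof of this lemma; it simply cites Rivlin's \emph{The Chebyshev Polynomials} (Section~2.4), where exactly this computation appears, so your proposal matches the intended argument.
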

\begin{definition}
\textit{Lebesgue constants} for $\ell_2-$regularized least squares approximation using Chebyshev polynomials of the first kind are defined as
\begin{equation*}
{\rm \Lambda}_L:=\frac{1}{\pi}\int_{-\pi}^{\pi}|u_L(t)|dt.
\end{equation*}
\end{definition}
The case of $\lambda=0$ leads to Lebesgue constants for Fourier series (without regularization) \citep[Section 2.4]{rivlin1980introduction} in the form of
\begin{equation*}
{\rm \Lambda}_L=\frac{1}{\pi}\int_{0}^{\pi}\frac{|\sin(L+\frac12)t|}{\sin\frac{t}{2}}dt=\frac{1}{2\pi}\int_{-\pi}^{\pi}\Big|\frac{\sin(L+\frac12)t}{\sin\frac{t}{2}}\Big|dt,
\end{equation*}
where the last integrand is the famous Dirichlet kernel. For estimation of $\Lambda_L$, we have the following lemma.

\begin{lemma}\label{dirichletkernel}
Let $D_n$ denote the Dirichlet kernel \citep{stein2003princeton}
\begin{equation*}
D_n(x):=\sum_{k=-n}^ne^{ikx}=\frac{\sin(n+\frac12)x}{\sin\frac x2}.
\end{equation*}
Then for~$n\geq2$,
\begin{equation}\label{eq:LebsgueconstantCheby}
\frac{1}{2\pi}\int_{-\pi}^{\pi}|D_n(x)|dx=\frac{4}{\pi^2}\log n+\mathcal{O}(1),
\end{equation}and
\begin{equation}\label{ineq:eta1}
\frac{1}{2\pi}\int_{-\pi}^{\pi}|D_n(x)|dx\leq\frac{4}{\pi^2}\log(n-1)+\eta,
\end{equation}
where $\eta=\frac{4}{\pi^2}+\frac{2}{\pi}(1+\int_{0}^{\pi}\frac{\sin x}{x}dx)=2.220884\ldots$.
\end{lemma}
\eqref{eq:LebsgueconstantCheby} is a known result given by \citet{fejer1910lebesguessche}, one might find in \citet[p.5]{lorentz1966approximation} and \citet[Section 2.7.2]{stein2003princeton}. \eqref{ineq:eta1} is sharper than the known result ${\rm\Lambda}_L<\frac{4}{\pi^2}\log n+3,\, n\geq2$ \cite[Lemma 2.2]{rivlin1980introduction}.
 For completeness, the proof of Lemma \ref{dirichletkernel} is given in Appendix: Proof of Lemma \ref{dirichletkernel}.

\begin{theorem}\label{prop:l2}
Suppose $f$ is continuous on $[-1,1]$, and normalized Chebyshev polynomials constitute a basis for $\mathbb{P}_L$. Then Lebesgue constants ${\rm\Lambda}_L$ for the $\ell_2-$regularized least squares approximation of degree $L$ ($L\geq2$) on $[-1,1]$ satisfy
\begin{equation}\label{lebesguebounds}{\rm \Lambda}_L=\frac{4\log L/\pi^2}{1+\lambda\tilde{\mu}^2}+\mathcal{O}(1),
\end{equation}\text{and}
\begin{equation}\label{ineq:lebesguebounds}
{\rm\Lambda}_L\leq\frac{4\log (L-1)/\pi^2+\eta}{1+\lambda\underline{\mu}^2},\end{equation}
where $\underline{\mu}=\min\{\mu_0,\mu_1,\ldots,\mu_L\}$, $\tilde{\mu}\in\mathbb{R}$ which satisfies $\min\{\mu_0,\mu_1,\ldots,\mu_L\}\leq\tilde{\mu}\leq\max\{\mu_0,\mu_1,\ldots,\mu_L\}$ and $\eta=2.220884\ldots$.
\end{theorem}
\begin{proof}
Since $f$ is continuous on $[-1,1]$, then $g(\theta)=f(\cos\theta)$ is continuous on $[0,\pi]$. If $g(-\theta)=g(\theta)$, then $g$ is continuous on $[-\pi,\pi]$. The even function $g$ gives $b_{\ell}=0$ for all $\ell=1,\ldots,L$, and then
\begin{align}
q_L(\theta)&=\nonumber\frac{\rho_{0,L}}{2}a_0+\sum_{\ell=1}^L\rho_{\ell,L}a_{\ell}\cos\ell\theta\\
&=\frac{\rho_{0,L}}{2}a_0+\sum_{\ell=1}^L\rho_{\ell,L}a_{\ell}T_{\ell}(x)\\ \nonumber
&=\frac{\sqrt{\pi/2}\rho_{0,L}}{2}a_0\tilde{T}_0+\sum_{\ell=1}^L\sqrt{\pi}\rho_{\ell,L}a_{\ell}\tilde{T}_{\ell}(x),
\end{align}
which reveals that this is the $\ell_2-$regularized least squares approximation of degree $L$ with the basis for $\mathbb{P}_L$ being normalized Chebyshev polynomials of the first kind. Since $g(\theta)$ is continuous on $[-\pi,\pi]$ with period $2\pi$, there must exist $M\geq0$ such that
\begin{equation*}
|g(t+\theta)|\leq M,\quad t,\theta\in[-\pi,\pi].
\end{equation*}
By Lemma \ref{qtheta},
\begin{equation*}
\max\limits_{\theta\in[-\pi,\pi]}|q_L(\theta)|\leq M{\rm \Lambda}_L.
\end{equation*}
When $\lambda=0$, one may easily verify that
\begin{equation*}
u_L(t)=\frac12+\sum_{\ell=0}^L\cos\ell t=\frac{\sin(L+\frac12)t}{2\sin\frac t2},
\end{equation*}
then by Lemma \ref{dirichletkernel},
\begin{equation*}
{\rm \Lambda}_L=\frac{1}{2\pi}\int_{-\pi}^{\pi}\Big|\frac{\sin(L+\frac12)t}{\sin\frac{t}{2}}\Big|dt\leq\frac{4}{\pi^2}\log (L-1)+\eta,
\end{equation*}
and
\begin{equation*}
{\rm \Lambda}_L=\frac{4}{\pi^2}\log L+\mathcal{O}(1).
\end{equation*}
Let $\underline{\mu}=\min\{\mu_0,\mu_1,\ldots,\mu_L\}$ and $\overline{\mu}=\max\{\mu_0,\mu_1,\ldots,\mu_L\}$. And let $\tilde{\mu}$ be a real number satisfying $\underline{\mu}\leq\tilde{\mu}\leq\overline{\mu}$. If $\lambda>0$, then
\begin{equation*}
u_L(t)\leq\frac{1}{1+\lambda\underline{\mu}^2}\left(\frac12+\sum_{\ell=0}^L\cos\ell t\right)=\frac{1}{1+\lambda\underline{\mu}^2}\frac{\sin(L+\frac12)t}{2\sin\frac t2},
\end{equation*}
and
\begin{equation*}
u_L(t)=\frac{1}{1+\lambda\tilde{\mu}^2}\left(\frac12+\sum_{\ell=0}^L\cos\ell t\right)=\frac{1}{1+\lambda\tilde{\mu}^2}\frac{\sin(L+\frac12)t}{2\sin\frac t2},
\end{equation*}
which gives the asymptotic result \eqref{lebesguebounds} and the inequality \eqref{ineq:lebesguebounds}.
\end{proof}

Take the family of normalized Chebyshev polynomials of the first kind $\{\tilde{T}_{\ell}(x)\}_{\ell=0}^L$ as the basis for $\mathbb{P}_L$ and the set of zeros of $\tilde{T}_{L+1}(x)$ as the set of nodes. Setting  $L=N$ and $\lambda=10^{-1}$. Fig. \ref{fig:lebesgue} illustrates Lebesgue constants with respect to different choices of regularization parameter $\lambda$.
\begin{figure}[htbp]
  \centering
  \includegraphics[width=10cm]{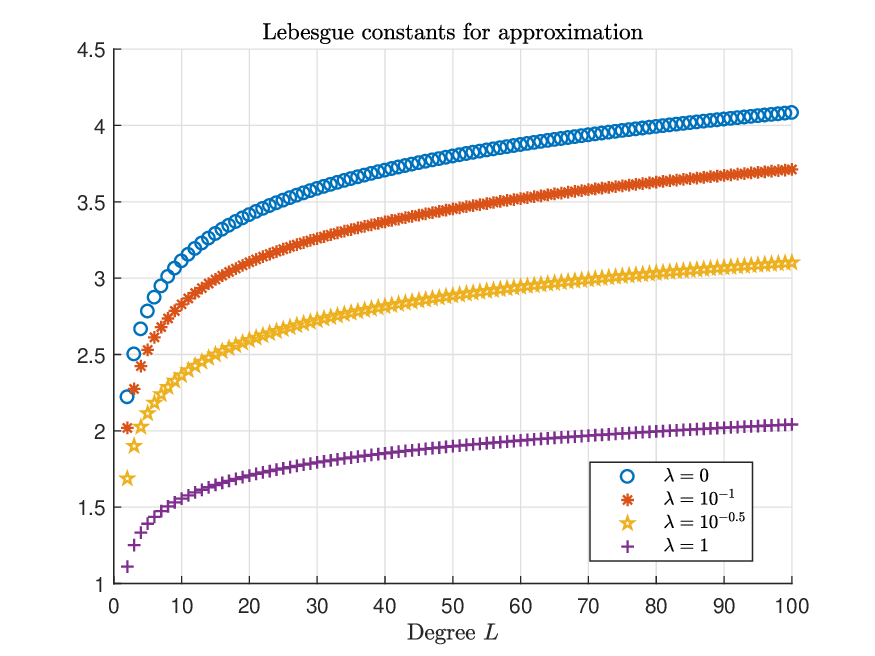}\\
  \caption{The Lebesgue constant of $\ell_2-$regularized approximation with $L=N$ and $\mu_{\ell}=1$ for $\ell=0,1,\ldots,L$ using Chebyshev polynomials of the first kind}\label{fig:lebesgue}
\end{figure}

\subsection{Lebesgue constants with the basis of Legendre polynomials}
In this subsection, we will derive asymptotic bounds of Lebesgue constants of the $\ell_2-$ regularized approximation by using Legendre polynomials. Consider the kernel
\begin{equation*}
K_L(x,y):=\sum_{\ell=0}^L\frac{2\ell+1}{1+\lambda\mu_{\ell}^2}P_{\ell}(x)P_{\ell}(y),
\end{equation*}
where $P_{\ell}(\cdot)$ denotes the Legendre polynomial of degree $\ell$. The case of $\lambda=0$ gives a simple kernel
\begin{equation*}
T_L(x,y)=\sum_{\ell=0}^L(2\ell+1)P_{\ell}(x)P_{\ell}(y)=(L+1)\frac{P_L(x)P_{L+1}(y)-P_{L+1}(x)P_L(y)}{y-x},
\end{equation*}where the last equality is due to \citet[(4)]{gronwall1913laplacesche}.
Thus,
\begin{equation*}
K_L(x)\triangleq K_L(x,1):=\sum_{\ell=0}^L\frac{2\ell+1}{1+\lambda\mu_{\ell}^2}P_{\ell}(x),
\end{equation*}
and
\begin{equation*}
T_L(x)\triangleq T_L(x,1):=\sum_{\ell=0}^L(2\ell+1)P_{\ell}(x)=(L+1)\frac{P_L(x)-P_{L+1}(x)}{1-x},
\end{equation*}
where the rightmost equality is due to \citet[(5)]{gronwall1913laplacesche}.
The reader may note that
\begin{equation}\label{K<S}
|K_L(x)|\leq\frac{1}{1+\lambda\underline{\mu}^2}|T_L(x)|,
\end{equation}
where $\underline{\mu}=\min\{\mu_0,\mu_1,\ldots,\mu_L\}$.
\begin{definition}
\textit{Lebesgue constants} for the $\ell_2-$regularized approximation using Legendre polynomials are defined as
\begin{equation*}
{\rm \Lambda}_L:=\frac12\int_{-1}^1|K_L(x)|dx.
\end{equation*}
\end{definition}
The case of $\lambda=0$ leads to
\begin{equation}\label{eq:theta}
{\rm \Theta}_L:=\frac12\int_{-1}^1|T_L(x)|dx=\frac{L+1}{2}\int_{-1}^1\left|\frac{P_L(x)-P_{L+1}(x)}{1-x}\right|dx,
\end{equation}
which is the definition of Lebesgue constant of Legendre truncation of degree $L$ \citep{gronwall1913laplacesche}.
\begin{lemma}[\citealp{gronwall1913laplacesche} and \citealp{von1934einige}]\label{gronwall}
Let ${\rm \Theta}_L$ be defined as \eqref{eq:theta}.
Then
\begin{equation}
\lim\limits_{L\rightarrow\infty}\frac{{\rm \Theta}_L}{\sqrt{L}}=2\sqrt{\frac{2}{\pi}}.
\end{equation}
\end{lemma}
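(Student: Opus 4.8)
The plan is to reduce the Legendre integral \eqref{eq:theta} to a purely trigonometric oscillatory integral, on which a standard averaging argument produces the constant $2\sqrt{2/\pi}$. First I would pass to the angular variable via $x=\cos\theta$, so that $1-x=2\sin^2(\theta/2)$ and $dx=-2\sin(\theta/2)\cos(\theta/2)\,d\theta$, with $\theta$ ranging over $[0,\pi]$. This converts the troublesome factor $1/(1-x)$ into $1/\big(2\sin^2(\theta/2)\big)$, whose interaction with the Jacobian leaves an amplitude that is integrable once the polynomial asymptotics are inserted.

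The key analytic input is the Laplace--Heine asymptotic formula for Legendre polynomials,
\begin{equation*}
P_n(\cos\theta)=\sqrt{\dfrac{2}{n\pi\sin\theta}}\,\cos\!\Big((n+\tfrac12)\theta-\tfrac{\pi}{4}\Big)+O(n^{-3/2}),
\end{equation*}
valid uniformly on compact subsets of $(0,\pi)$ \cite{gautschi2004orthogonal}. Applying it to $P_L$ and $P_{L+1}$ and using $\cos A-\cos B=2\sin\frac{A+B}{2}\sin\frac{B-A}{2}$ with $A=(L+\tfrac12)\theta-\tfrac{\pi}{4}$, $B=(L+\tfrac32)\theta-\tfrac{\pi}{4}$, the difference $P_L(\cos\theta)-P_{L+1}(\cos\theta)$ collapses to a single oscillatory term proportional to $\sin\!\big((L+1)\theta-\tfrac{\pi}{4}\big)$. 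Dividing by $1-x$, multiplying by the Jacobian, and simplifying the resulting amplitude through $\sin\theta=2\sin(\theta/2)\cos(\theta/2)$ reduces everything to $\sqrt{\cot(\theta/2)}$, so that
\begin{equation*}
{\rm \Theta}_L\sim\dfrac{L+1}{\sqrt{L\pi}}\int_0^\pi\sqrt{\cot(\theta/2)}\,\Big|\sin\!\big((L+1)\theta-\tfrac{\pi}{4}\big)\Big|\,d\theta.
\end{equation*}

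Next I would invoke the averaging principle for rapidly oscillating integrands: since the mean value of $|\sin|$ over a period is $2/\pi$, for any fixed integrable amplitude $a(\theta)$ one has $\int_0^\pi a(\theta)\,|\sin(\omega\theta+\phi)|\,d\theta\to\frac{2}{\pi}\int_0^\pi a(\theta)\,d\theta$ as $\omega\to\infty$. Taking $a(\theta)=\sqrt{\cot(\theta/2)}$ and computing the remaining integral by the substitution $u=\theta/2$ and the Beta function gives $\int_0^\pi\sqrt{\cot(\theta/2)}\,d\theta=B(\tfrac14,\tfrac34)=\Gamma(\tfrac14)\Gamma(\tfrac34)=\pi\sqrt{2}$, the last equality by the reflection formula. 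Combining these with $(L+1)/\sqrt{L}\to\sqrt{L}$ yields ${\rm \Theta}_L/\sqrt{L}\to\frac{2}{\pi}\cdot\pi\sqrt{2}/\sqrt{\pi}=2\sqrt{2/\pi}$, as claimed in \eqref{gronwall}.

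The main obstacle is rigor at the endpoints $\theta=0$ and $\theta=\pi$, where the Laplace--Heine expansion degrades and, at $\theta=0$ (i.e. $x=1$), the amplitude $\sqrt{\cot(\theta/2)}$ blows up like $\theta^{-1/2}$. The cleanest remedy is to excise neighborhoods $[0,\delta]\cup[\pi-\delta,\pi]$, bound their contribution uniformly in $L$ using the crude estimate $|P_n|\le1$ together with the integrability of $\theta^{-1/2}$, and then send $\delta\to0$ after $L\to\infty$; on the complementary compact set the uniform asymptotics and the averaging lemma apply directly. This endpoint control, rather than any single calculation, is the delicate step, and it is precisely what the classical treatments of Gronwall and von Fej\'er carry out \cite{gronwall1913laplacesche,von1934einige}.
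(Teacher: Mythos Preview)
The paper does not prove this lemma at all: it is quoted as a classical result of Gronwall and von Fej\'er with a bare citation, and is then used as a black box in Theorem~\ref{legendrelebesgue}. So there is nothing in the paper to compare your argument against.

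That said, your sketch is mathematically sound and is essentially the route taken in the original sources. The reduction via $x=\cos\theta$, the Laplace--Heine asymptotics, the trigonometric collapse of $P_L-P_{L+1}$ to a single oscillation times $\sin(\theta/2)$, the resulting amplitude $\sqrt{\cot(\theta/2)}$, the averaging of $|\sin|$ to $2/\pi$, and the Beta/reflection evaluation $\int_0^\pi\sqrt{\cot(\theta/2)}\,d\theta=\pi\sqrt2$ all check out and combine to the stated constant. Your identification of the genuine technical point---uniform control near $\theta=0,\pi$ where the asymptotic expansion degrades and the amplitude is only weakly integrable---is accurate; the excision argument you outline (cut out $[0,\delta]\cup[\pi-\delta,\pi]$, bound those pieces by $|P_n|\le1$ and the $L^1$-integrability of $\theta^{-1/2}$, then let $\delta\to0$ after $L\to\infty$) is exactly how the classical proofs close the gap. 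If you were writing this up in full you would also need to justify replacing $1/\sqrt{L+1}$ by $1/\sqrt{L}$ in the asymptotic for $P_{L+1}$ (the discrepancy contributes $O(L^{-3/2})$ after integration and is harmless), but this is routine.
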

\noindent Combining Lemma \eqref{gronwall} with the inequality \eqref{K<S}, we obtain the estimation of ${\rm \Lambda}_L$ in the case of Legendre polynomials.
\begin{theorem}\label{legendrelebesgue}
Suppose $f$ is continuous on $[-1,1]$, and Legendre polynomials constitute the basis for $\mathbb{P}_L$. Then Lebesgue constants ${\rm\Lambda}_L$ for the $\ell_2-$regularized least squares approximation of degree $L$ ($L\geq2$) on $[-1,1]$ satisfy
\begin{equation*}
{\rm\Lambda}_L\leq\frac{1}{1+\lambda\underline{\mu}^2}\left(\frac{2^{3/2}}{\sqrt{\pi}}L^{1/2}+o(L^{1/2})\right),
\end{equation*}
where $\underline{\mu}=\min\{\mu_0,\mu_1,\ldots,\mu_L\}$.
\end{theorem}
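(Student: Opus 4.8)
The plan is to reduce the regularized Lebesgue constant ${\rm \Lambda}_L$ to the unregularized Legendre-truncation Lebesgue constant ${\rm \Theta}_L$ by means of the pointwise bound \eqref{K<S}, and then to invoke the asymptotic \eqref{gronwall}. The genuine analytic content, namely the $\sqrt{L}$ growth rate, has already been absorbed into the cited result \eqref{gronwall}, so the remaining work is essentially bookkeeping.

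First I would start from the definition ${\rm \Lambda}_L=\frac12\int_{-1}^1|K_L(x)|\,dx$ and apply the pointwise inequality \eqref{K<S}, that is $|K_L(x)|\leq\frac{1}{1+\lambda\underline{\mu}^2}|T_L(x)|$, valid for every $x\in[-1,1]$ because $\underline{\mu}=\min_{\ell}\mu_{\ell}$ forces each coefficient factor $1/(1+\lambda\mu_{\ell}^2)$ to be no larger than $1/(1+\lambda\underline{\mu}^2)$. Since this constant does not depend on $x$, it factors out of the integral, yielding
\begin{equation*}
{\rm \Lambda}_L\leq\frac{1}{1+\lambda\underline{\mu}^2}\cdot\frac12\int_{-1}^1|T_L(x)|\,dx=\frac{1}{1+\lambda\underline{\mu}^2}\,{\rm \Theta}_L,
\end{equation*}
where the last equality is exactly the definition \eqref{eq:theta} of ${\rm \Theta}_L$.

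Next I would substitute the asymptotic \eqref{gronwall}, which gives ${\rm \Theta}_L/\sqrt{L}\to 2\sqrt{2/\pi}$, equivalently ${\rm \Theta}_L=2\sqrt{2/\pi}\,L^{1/2}+o(L^{1/2})$. Using the algebraic identity $2\sqrt{2/\pi}=2^{3/2}/\sqrt{\pi}$, this becomes ${\rm \Theta}_L=(2^{3/2}/\sqrt{\pi})L^{1/2}+o(L^{1/2})$, and combining it with the displayed bound finishes the argument. The only points deserving care, rather than any true obstacle, are to confirm that \eqref{K<S} is genuinely a pointwise inequality in $x$ so that it survives integration with the regularization constant factoring out cleanly, and to check that the leading constant supplied by \eqref{gronwall} indeed matches the stated $2^{3/2}/\sqrt{\pi}$.
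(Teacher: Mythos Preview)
Your proposal is correct and follows precisely the route the paper takes: integrate the pointwise bound \eqref{K<S} to obtain ${\rm \Lambda}_L\le\frac{1}{1+\lambda\underline{\mu}^2}\,{\rm \Theta}_L$, then insert Gronwall's asymptotic \eqref{gronwall}. The paper's own proof consists of nothing more than the sentence ``based on the above discussion,'' so your write-up is in fact more detailed than the original while using the same ingredients.
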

The proof for Theorem \ref{legendrelebesgue} is based on the above discussion.

\section{Sparsity of solution to the $\ell_1-$regularized approximation problem}\label{l1sparsity}
Some real-world problems, such as signal processing, often have sparse solutions. One may seek the sparsest solution to a problem, that is, the solution containing zero elements at most. However, a vector of real data would rarely contains many strict zeros. One may introduce other methods to seek sparsity, such as $\min_x$ $\|x\|_p$, where $\|x\|_p=(\sum_i|x_i|^p)^{1/p}$, $0<p<1$. Nevertheless, these optimization problems mentioned above are nonconvex and nondifferentiable \citep{clarke1990optimization,bruckstein2009sparse}. Regularized methods, especially $\ell_1-$regularizaed methods, also produce sparse solutions, according to our examples. One may find a relatively sparse solution by minimizing $\ell_1$ norm, because such an optimization problem is a convex optimization problem, and its solution is the closest one to the sparsest solution for all $p\geq1$ in $\|x\|_p$. For topics on sparsity, we refer to \citet{bruckstein2009sparse}. We consider the sparsity of the solution ${\bm \beta}$ to the $\ell_1-$regularized approximation problem \eqref{problem:l1regumatrix}. The sparsity is measured by the number of nonzero elements of $\bm\beta$, denoted as $\|{\bm\beta}\|_0$, also known as the zero ``norm" (it is not a norm actually) of $\bm\beta$.

Before discussing the upper bound of $\|{\bm\beta}\|_0$, we offer a quick glimpse into the zero elements distribution of the $\ell_1-$regularized approximation solution. From Definition \ref{equ:soft} of the soft threshold operator, we have the following result.

\begin{proposition}[Zero elements distribution of the $\ell_1-$regularized approximation solution]
Adopt the notation and assumptions of Lemma \ref{thm:diagonal}. If $\mu_{\ell}$ satisfies
\begin{equation}
-\lambda\mu_{\ell}\leq2\sum\limits_{j=0}^N\omega_j\tilde{\Phi}_{\ell}(x_j)f(x_j)\leq\lambda\mu_{\ell},
\end{equation}
then its corresponding $\beta_{\ell}$ is zero, $\ell=0,1,\ldots,L$.
\end{proposition}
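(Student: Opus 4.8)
The plan is to read the claimed zero-element condition directly off the closed-form solution of the $\ell_1$-regularized problem established in Theorem \ref{prop:l1}. By that theorem, under the notation and assumptions of Lemma \ref{thm:diagonal}, each coefficient of the minimizer is given explicitly by $\beta_{\ell}=\tfrac12 S_{\lambda\mu_{\ell}}(2\alpha_{\ell})$, where $\alpha_{\ell}=\sum_{j=0}^N\omega_j\tilde{\Phi}_{\ell}(x_j)f(x_j)$. Thus the entire statement reduces to determining when the soft threshold operator $S_{\lambda\mu_{\ell}}(2\alpha_{\ell})$ vanishes.

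First I would recall the definition $S_k(a)=\max(0,a-k)+\min(0,a+k)$ and simply evaluate it at $a=2\alpha_{\ell}$ and $k=\lambda\mu_{\ell}$ (noting $\mu_\ell\ge 0$ and $\lambda>0$, so $k\ge 0$). The hypothesis of the proposition is exactly $-\lambda\mu_{\ell}\le 2\alpha_{\ell}\le\lambda\mu_{\ell}$, i.e. $|2\alpha_\ell|\le\lambda\mu_\ell$. Under this two-sided bound, $2\alpha_{\ell}-\lambda\mu_{\ell}\le 0$ forces the first term $\max(0,2\alpha_{\ell}-\lambda\mu_{\ell})=0$, while $2\alpha_{\ell}+\lambda\mu_{\ell}\ge 0$ forces the second term $\min(0,2\alpha_{\ell}+\lambda\mu_{\ell})=0$. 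Hence $S_{\lambda\mu_{\ell}}(2\alpha_{\ell})=0$ and therefore $\beta_{\ell}=0$.

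This is precisely the third case already dissected inside the proof of Theorem \ref{prop:l1}, so the cleanest route is to invoke that theorem rather than redo the subgradient analysis: the proposition is a direct corollary isolating the ``dead zone'' of the soft-threshold map. I would state this explicitly, pointing out that $\alpha_\ell$ in the proposition's displayed inequality is the same quantity $\alpha_\ell=\sum_{j=0}^N\omega_j\tilde{\Phi}_\ell(x_j)f(x_j)$ appearing in \eqref{beta:l1}, so that the hypothesis $-\lambda\mu_\ell\le 2\alpha_\ell\le\lambda\mu_\ell$ is literally the condition $|2\alpha_\ell|\le\lambda\mu_\ell$ defining the flat region of $S_{\lambda\mu_\ell}$.

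There is no real obstacle here; the proof is a one-line consequence of the closed form. If anything, the only point requiring a sentence of care is the boundary behavior when $2\alpha_{\ell}=\pm\lambda\mu_{\ell}$: at these endpoints one of the two terms in $S_{\lambda\mu_\ell}(2\alpha_\ell)$ is still zero (the relevant argument equals $0$, and both $\max(0,0)$ and $\min(0,0)$ vanish), so the inequalities may safely be taken non-strict and $\beta_\ell=0$ persists on the closed interval, consistent with the non-strict bounds stated in the proposition and with case~3 of Theorem \ref{prop:l1}.
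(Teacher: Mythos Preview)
Your proposal is correct. The paper actually states this proposition without proof, treating it as an immediate consequence of Theorem~\ref{prop:l1}; your argument is exactly the intended one, reading $\beta_\ell=0$ off the third case of the soft-threshold formula \eqref{beta:l1} when $|2\alpha_\ell|\le\lambda\mu_\ell$.
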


If $\lambda>0$, then $\|{\rm\bf{A}}_L^T{\bf{W}}{\rm\bf{f}}\|_0$ becomes an upper bound of the number of nonzero elements of ${\bm{\beta}}$. Furthermore, we obtain the exact number of nonzero elements of ${\bm{\beta}}$ with the help of the information of ${\bm{\beta}}$.
\begin{theorem}\label{sparsityyesregu}
Let ${\bm{\beta}}=[\beta_0,\beta_1,\ldots,\beta_L]^T$ be the solution to the $\ell_1-$regularized problem \eqref{problem:l1regumatrix}. If $\lambda>0$, then the number of nonzero elements of ${\bm{\beta}}$ satisfies
\begin{equation}\label{yesregu}
\|\bm{\beta}\|_0\leq\|{\rm\bf{A}}_L^T{\bf{W}}{\rm\bf{f}}\|_0,
\end{equation}
and
\begin{equation}\label{exactregu}
\|\bm{\beta}\|_0=\|{\rm\bf{A}}_L^T{\bf{W}}{\rm{\bf{f}}}\|_0-\#\text{\{occurrences of $\beta_{\ell}=0$ but $\alpha_{\ell}\neq0$\}},
\end{equation}
where ${\rm \#}\text{\{occurrences of $\beta_{\ell}=0$ but $\alpha_{\ell}\neq0$\}}$ denotes the number of occurrences of $\beta_{\ell}=0$ but $\alpha_{\ell}\neq0$ for $\ell=0,1,\ldots,L$.
\end{theorem}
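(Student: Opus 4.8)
The plan is to read everything directly off the closed-form solution supplied by Theorem \ref{prop:l1}. Writing $\bm{\alpha}$ for the vector whose $\ell$-th entry is $\alpha_{\ell}=\sum_{j=0}^N\omega_j\tilde{\Phi}_{\ell}(x_j)f(x_j)$, one first observes that $\bm{\alpha}={\rm\bf{A}}_L^T{\bf{\Lambda}}{\rm\bf{f}}$: indeed $({\rm\bf{A}}_L^T{\bf{\Lambda}}{\rm\bf{f}})_{\ell}=\sum_{j=0}^N\tilde{\Phi}_{\ell}(x_j)\omega_jf(x_j)=\alpha_{\ell}$. Hence $\|{\rm\bf{A}}_L^T{\bf{\Lambda}}{\rm\bf{f}}\|_0=\#\{\ell:\alpha_{\ell}\neq0\}$, while Theorem \ref{prop:l1} gives $\beta_{\ell}=\tfrac12 S_{\lambda\mu_{\ell}}(2\alpha_{\ell})$ for each $\ell$. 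The whole statement then reduces to a counting argument comparing the supports of $\bm{\beta}$ and $\bm{\alpha}$.

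The key step is to establish the support containment $\{\ell:\beta_{\ell}\neq0\}\subseteq\{\ell:\alpha_{\ell}\neq0\}$, equivalently the implication $\alpha_{\ell}=0\Rightarrow\beta_{\ell}=0$. When $\alpha_{\ell}=0$, the definition of the soft threshold operator gives $S_{\lambda\mu_{\ell}}(0)=\max(0,-\lambda\mu_{\ell})+\min(0,\lambda\mu_{\ell})$; since $\lambda>0$ and $\mu_{\ell}\geq0$, both terms vanish, so $\beta_{\ell}=0$. This containment immediately yields $\|\bm{\beta}\|_0\leq\|\bm{\alpha}\|_0$, which is precisely \eqref{yesregu}.

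For the exact identity \eqref{exactregu}, I would partition the index set $\{\ell:\alpha_{\ell}\neq0\}$ according to whether the soft threshold annihilates the coefficient. By the containment just proved, every nonzero $\beta_{\ell}$ forces $\alpha_{\ell}\neq0$, so $\{\ell:\beta_{\ell}\neq0\}=\{\ell:\alpha_{\ell}\neq0\}\setminus\{\ell:\alpha_{\ell}\neq0,\ \beta_{\ell}=0\}$, a disjoint decomposition. Taking cardinalities over $\ell=0,1,\ldots,L$ gives $\|\bm{\beta}\|_0=\|\bm{\alpha}\|_0-\#\{\ell:\beta_{\ell}=0,\ \alpha_{\ell}\neq0\}$, which is exactly \eqref{exactregu}.

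There is no real analytic obstacle here, since the substantive work was already done in deriving the closed form of Theorem \ref{prop:l1}. The only point demanding care is the boundary behaviour of the soft threshold at the origin: confirming $S_{\lambda\mu_{\ell}}(0)=0$ under $\lambda>0$ and $\mu_{\ell}\geq0$, and checking that the degenerate case $\mu_{\ell}=0$ (where $\beta_{\ell}=\alpha_{\ell}$) does not break the support containment. Once the implication $\alpha_{\ell}=0\Rightarrow\beta_{\ell}=0$ is secured, both \eqref{yesregu} and \eqref{exactregu} are purely set-theoretic bookkeeping on the index set $\{0,1,\ldots,L\}$.
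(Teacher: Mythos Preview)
Your proof is correct and takes a genuinely different route from the paper's. The paper does not invoke the closed-form soft-threshold expression from Theorem~\ref{prop:l1} until the very end; instead it begins from the first-order optimality inclusion ${\bm\beta}\in{\rm \bf{A}}_L^T{\bf{\Lambda}}{\rm \bf{f}}-\tfrac{\lambda}{2}\partial(\|{\rm \bf{R}}_L\bm{\beta}\|_1)$, fixes a concrete subgradient vector ${\rm\bf h}$ with ${\bm\beta}={\rm \bf{A}}_L^T{\bf{\Lambda}}{\rm\bf f}-\tfrac{\lambda}{2}{\rm\bf h}$, and then argues support relations $\|{\bm\beta}\|_0\le\|{\rm\bf h}\|_0=\|{\rm \bf{A}}_L^T{\bf{\Lambda}}{\rm\bf f}\|_0$ by analysing the signs of the $h_{\ell}$. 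Only for the exact count \eqref{exactregu} does the paper compute $h_{\ell}/\mu_{\ell}$ via the closed form.

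Your argument bypasses the subgradient machinery entirely: once $\beta_{\ell}=\tfrac12 S_{\lambda\mu_{\ell}}(2\alpha_{\ell})$ is in hand, the implication $\alpha_{\ell}=0\Rightarrow\beta_{\ell}=0$ is immediate from $S_{\lambda\mu_{\ell}}(0)=0$, and both \eqref{yesregu} and \eqref{exactregu} reduce to counting. This is shorter and more elementary, and it handles the boundary case $\mu_{\ell}=0$ transparently (whereas the paper's subgradient step tacitly assumes $\mu_{\ell}>0$). The paper's approach, in turn, is more structural: the subgradient argument for \eqref{yesregu} would survive in settings where the design matrix is not orthogonal and no closed form is available. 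Given that Theorem~\ref{prop:l1} is already established here, your shortcut is the cleaner choice.
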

\begin{proof}
The first order condition \eqref{l1deri} of the $\ell_1-$regularized approximation problem \eqref{problem:l1regumatrix} in the matrix form can be rewritten as
\begin{equation*}
{\bm \beta}\in{\rm \bf{A}}_L^T{\bf{W}}{\rm \bf{f}}-\frac{\lambda\partial(\|{\rm \bf{R}}_L\bm{\beta}\|_1)}{2}.
\end{equation*}
To obtain a solution, there must exist an $L+1$ vector ${\rm\bf{h}}=[h_0,h_1,\ldots,h_L]^T\in\partial(\|{\rm \bf{R}}_L\bm{\beta}\|_1)$ such that
\begin{equation}\label{betah}
{\bm \beta}={\rm \bf{A}}_L^T{\bf{W}}{\rm \bf{f}}-\frac{\lambda{\rm\bf{h}}}{2}.
\end{equation}
Since $\mu_{\ell}>0$ for all $\ell=0,1,\ldots,L$, elements of ${\rm\bf{h}}$ satisfy
\begin{equation*}
h_{\ell}=\begin{cases}
\mu_{\ell},& \mu_{\ell}\beta_{\ell}>0,\text{~i.e.,~}\beta_{\ell}>0\\
-\mu_{\ell},& \mu_{\ell}\beta_{\ell}<0,\text{~i.e.,~}\beta_{\ell}<0\\
r\mu_{\ell}~\forall r\in[-1,1],& \mu_{\ell}\beta_{\ell}=0,\text{~i.e.,~}\beta_{\ell}=0,
\end{cases}
\end{equation*}
yielding $\|{\rm{\bf{h}}}\|_0\geq\|{\bm{\beta}}\|_0$. Expression \eqref{betah} gives
\begin{equation*}
\left\|\bm{\beta}+\frac{\lambda{\rm{\bf{h}}}}{2}\right\|_0=\left\|{\rm \bf{A}}_L^T{\bf{W}}{\rm \bf{f}}\right\|_0.
\end{equation*}
If $\beta_{\ell}>(\text{or}<)0$, then $h_{\ell}>(\text{or}<)0$. If $\beta_{\ell}=0$, whereas $h_{\ell}$ may not be zero. Thus,
\begin{equation*}
\left\|\bm{\beta}+\frac{\lambda{\rm{\bf{h}}}}{2}\right\|_0=\|{\rm{\bf{h}}}\|_0.
\end{equation*}
Hence,
\begin{equation*}
\|{\bm{\beta}}\|_0\leq\|{\rm{\bf{h}}}\|_0=\|{\rm \bf{A}}_L^T{\bf{W}}{\rm \bf{f}}\|_0.
\end{equation*}
Let $\beta_{\ell}^*$ denote the optimal solution to the problem \eqref{equ:optimal}
. With the aid of the closed-form solution to the $\ell_1-$regularized approximation problem (see \eqref{beta:l1}), expression \eqref{betah} gives birth to
\begin{equation*}
\frac{h_{\ell}}{\mu_{\ell}}=\frac{2}{\lambda\mu_{\ell}}(\alpha_{\ell}-\beta_{\ell}^*)=\begin{cases}
1,&\beta_{\ell}^*>0,\\
-1,&\beta_{\ell}^*<0,\\
\frac{2\alpha_{\ell}}{\lambda\mu_{\ell}},&\beta_{\ell}^*=0.
\end{cases}
\end{equation*}
Due to $\|\rm{\bf{h}}\|_0=\left\|\frac{\rm{\bf{h}}}{\bm{\mu}}\right\|_0$, where $\frac{\rm{\bf{h}}}{\bm{\mu}}$ denotes the pointwise division between $\rm{\bf{h}}$ and $\bm{\mu}$, the difference between $\|\rm{\bf{h}}\|_0$ and $\|\bm{\beta}\|_0$ is expressed as
\begin{equation}\label{h-beta}
\|\rm{\bf{h}}\|_0-\|\bm{\beta}\|_0=\left\|\frac{\rm{\bf{h}}}{\bm{\mu}}\right\|_0-\|\bm{\beta}\|_0=\#\text{\{occurrences of $\beta_{\ell}=0$ but $\alpha_{\ell}\neq0$\}}.
\end{equation}
Together with $\|\rm{\bf{h}}\|_0=\|{\rm\bf{A}}_L^T{\bf{W}}{\rm\bf{f}}\|_0$ and \eqref{h-beta}, we obtain the exact number \eqref{exactregu} of nonzero elements of ${\bm{\beta}}$.
\end{proof}

\begin{corollary}\label{sparsitynoregu}
If $\lambda=0$, then the number of nonzero elements of ${\bm{\beta}}$ satisfies
\begin{equation}\label{noregu}
\|\bm{\beta}\|_0=\|{\rm\bf{A}}_L^T{\bf{W}}{\rm\bf{f}}\|_0.
\end{equation}
\end{corollary}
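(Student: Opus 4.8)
The plan is to read off the result directly from the closed-form $\ell_1$-solution in Theorem \ref{prop:l1}, using the elementary observation that the soft threshold operator degenerates to the identity at $\lambda=0$. First I would substitute $\lambda=0$ into the defining formula $S_k(a)=\max(0,a-k)+\min(0,a+k)$, obtaining $S_0(a)=\max(0,a)+\min(0,a)=a$ for every $a\in\mathbb{R}$ (the three sign cases $a>0$, $a<0$, $a=0$ all return $a$). Feeding this into \eqref{beta:l1} gives $\beta_{\ell}=\tfrac12 S_0(2\alpha_{\ell})=\alpha_{\ell}$ for each $\ell=0,1,\ldots,L$, where $\alpha_{\ell}=\sum_{j=0}^N\omega_j\tilde{\Phi}_{\ell}(x_j)f(x_j)$ is exactly the $\ell$-th entry of ${\rm\bf{A}}_L^T{\bf{\Lambda}}{\rm\bf{f}}$.

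Consequently, at $\lambda=0$ the minimizing coefficient vector coincides entrywise with the data vector, $\bm{\beta}={\rm\bf{A}}_L^T{\bf{\Lambda}}{\rm\bf{f}}$. Two vectors that are equal share the same support, and the zero-count $\|\cdot\|_0$ depends only on the support; hence $\|\bm{\beta}\|_0=\|{\rm\bf{A}}_L^T{\bf{\Lambda}}{\rm\bf{f}}\|_0$, which is precisely \eqref{noregu}. This is the whole argument, and it is the cleanest route.

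As a consistency check, I would also recover the identity as the limiting case of \eqref{exactregu} from Theorem \ref{sparsityyesregu}. The three-case analysis in the proof of Theorem \ref{prop:l1} shows that $\beta_{\ell}=0$ holds exactly when $-\lambda\mu_{\ell}\leq 2\alpha_{\ell}\leq\lambda\mu_{\ell}$; setting $\lambda=0$ collapses this interval to the single condition $\alpha_{\ell}=0$. Thus $\beta_{\ell}=0$ if and only if $\alpha_{\ell}=0$, so the correction count $\#\{\text{occurrences of }\beta_{\ell}=0\text{ but }\alpha_{\ell}\neq0\}$ in \eqref{exactregu} vanishes, and \eqref{exactregu} reduces to \eqref{noregu}. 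Both derivations agree, the second merely re-expressing the first.

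There is essentially no obstacle here: the corollary is a direct specialization of the closed-form solution, and the only substantive verification is the trivial fact that $S_0$ is the identity map. The single point I would state with care is that equality of the two vectors immediately forces equality of their $0$-(pseudo)norms, since $\|\cdot\|_0$ is a function of the support alone; unlike the $\lambda>0$ case of Theorem \ref{sparsityyesregu}, no strict inequality can be lost and so no correction term appears.
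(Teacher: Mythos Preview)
Your proposal is correct. The paper states this corollary without an explicit proof, treating it as immediate from the surrounding results; your direct substitution of $\lambda=0$ into the closed-form solution \eqref{beta:l1} (so that $S_0(2\alpha_\ell)=2\alpha_\ell$ and hence $\bm{\beta}={\rm\bf{A}}_L^T{\bf{\Lambda}}{\rm\bf{f}}$) makes that implicit step fully explicit and is exactly the intended argument. One small caveat on your consistency check: Theorem~\ref{sparsityyesregu} is stated under the hypothesis $\lambda>0$, so \eqref{exactregu} cannot literally be specialized to $\lambda=0$; what you are really doing there is rerunning the three-case analysis from the proof of Theorem~\ref{prop:l1} at $\lambda=0$, which is fine but is not a formal ``limiting case'' of the theorem as stated.
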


\begin{remark}
Together, Theorem \ref{sparsityyesregu} and Corollary \ref{sparsitynoregu} state that the regularized minimization is better than minimization without regularization in terms of sparsity.
\end{remark}

Let the basis for $\mathbb{P}_L$ be the family of normalized Chebyshev polynomials of the first kind $\{\tilde{T}_{\ell}(x)\}_{\ell=0}^L$ and the set of nodes be the set of zeros of $\tilde{T}_{N+1}(x)$. With degree $L$ of approximation polynomial ranging from $1$ to $60$, $\lambda$ evaluated $10^{-1}$ and $\mu_{\ell}$ evaluated $1$ for all $\ell=0,1,\ldots,L$, Fig. \ref{sparsity} gives four examples on the bounds and estimations given above.
\begin{figure}[htbp]
  \centering
  \includegraphics[width=\textwidth]{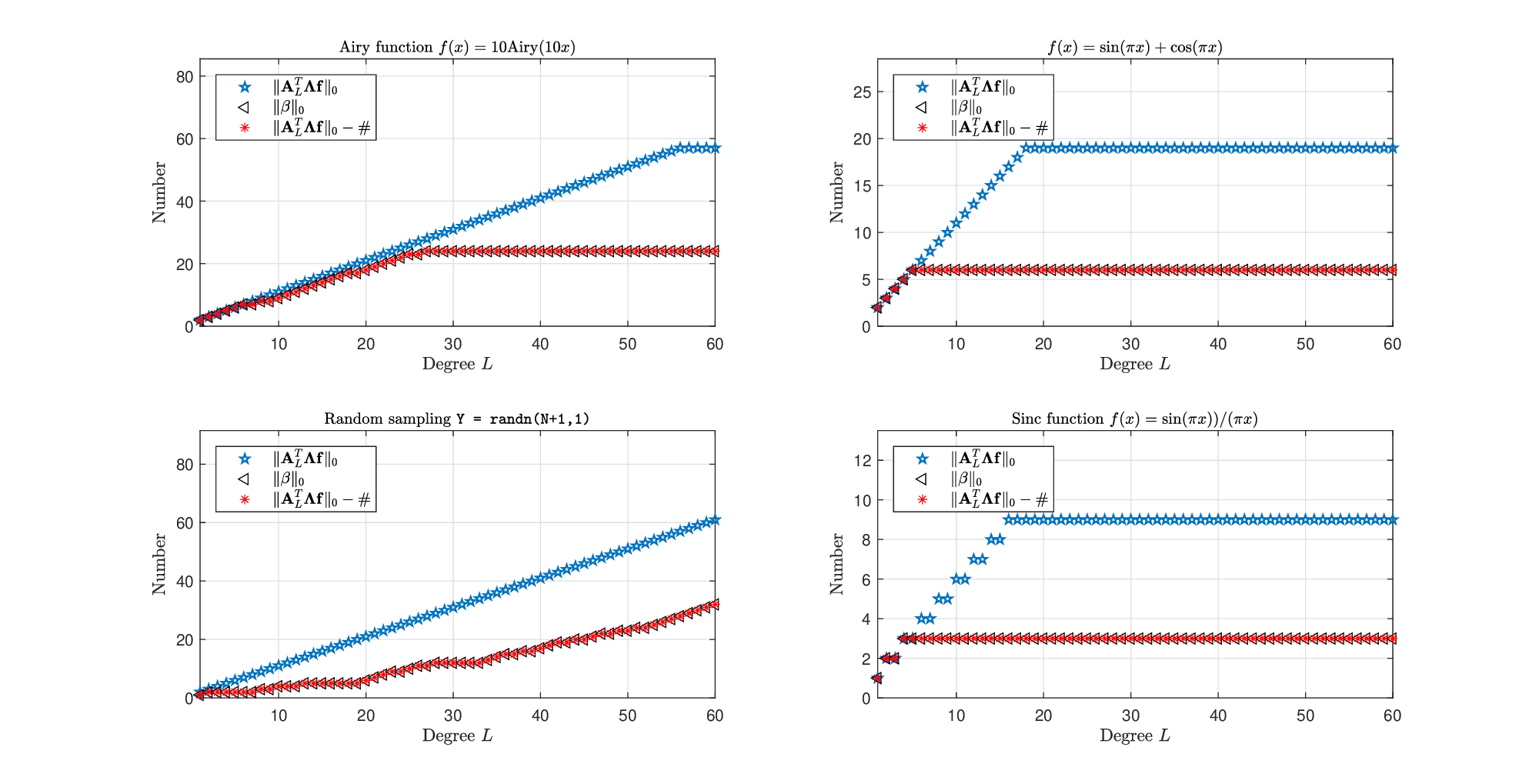}\\
  \caption{Examples on bounding the number of nonzero elements, where \# denotes $\#\text{\{occurrences of $\beta_{\ell}=0$ but $\alpha_{\ell}\neq0$\}}$}\label{sparsity}
\end{figure}
\section{Numerical experiments}\label{numericalexperiments}
In this section, we report numerical results to illustrate the theoretical results derived above and test the efficiency of the $\ell_{2}-$regularized approximation polynomial $p_{L,N+1}^{\ell_2}$ \eqref{p:l2} and the $\ell_{1}-$regularized approximation polynomial $p_{L,N+1}^{\ell_1}$ \eqref{p:l1}. The choices of the basis for $\mathbb{P}_L$ and the set of nodes $\mathcal{X}_{N+1}$ are primary when using both models. We choose Chebyshev polynomials of the first kind and the corresponding Chebyshev points. Certainly, choosing other orthogonal polynomials such as Legendre polynomials is also possible. All computations are performed in MATLAB in double precision arithmetic. Some related commands, for instance, obtaining quadrature points and weights, are included in \textsc{Chebfun} 5.7.0 \citep{trefethon2017chebfun}.

To test the efficiency of approximation, we define the uniform error and the $L_2$ error to measure the approximation error:
\begin{itemize}
  \item The uniform error of the approximation is estimated by
  \begin{equation*}\begin{split}
  \|f(x)-p_{L,N+1}(x)\|_{\infty} &:=\underset{x\in[-1,1]}{\max}|f(x)-p_{L,N+1}(x)|\\
                           &\simeq\underset{x\in\mathcal{X}}{\max}|f(x)-p_{L,N+1}(x)|,
  \end{split}\end{equation*}
  where $\mathcal{X}$ is a large but finite set of well distributed points (for example, clustered grids, see \citet[Chapter 5]{trefethen2000spectral}) over the interval $[-1,1]$.
  \item The $L_2$ error of the approximation is estimated by a proper Gauss quadrature rule:
  \begin{equation*}\label{l2error}\begin{split}
  \|f(x)-p_{L,N+1}(x)\|_{L_2} &=\left(\int_{-1}^1w(x)(f(x)-p_{L,N+1}(x))^2dx\right)^{1/2}\\
                        &\simeq\left(\sum_{j=0}^N\omega_j(f(x_j)-p_{L,N+1}(x_j))^2\right)^{1/2}.
  \end{split}\end{equation*}
\end{itemize}
\subsection{Regularized approximation models for exact data}\label{testexact}
The fact should always stick in readers' mind that regularization is introduced to solve ill-posed problems or to prevent overfitting. When approximation applies to functions without noise, regularization parameter $\lambda=0$ (no regularization) contributes to the best choice of approximating. Fig. \ref{smoothapproximation} reports the efficiency and errors for approximating function
\begin{equation*}
f_1(x)=\tanh(20\sin(12x))+0.02\text{e}^{3x}\sin(300x),
\end{equation*}
with or without regularization over $[-1,1]$. The test function is given in \citet{trefethen2013approximation}. Let $N = 600$, $L = 200$, $\lambda = 10^{-1}$ and $\mu_{\ell} = 1$ for all $\ell = 0,1,...,L$. Fig. \ref{smoothapproximation} illustrates that regularization is beyond use in this well-posed approximation problem, and $\ell_2-$regularization is better than $\ell_1-$regularization in approximating smooth functions.
\begin{figure}[htbp]
  \centering
  \includegraphics[width=\textwidth]{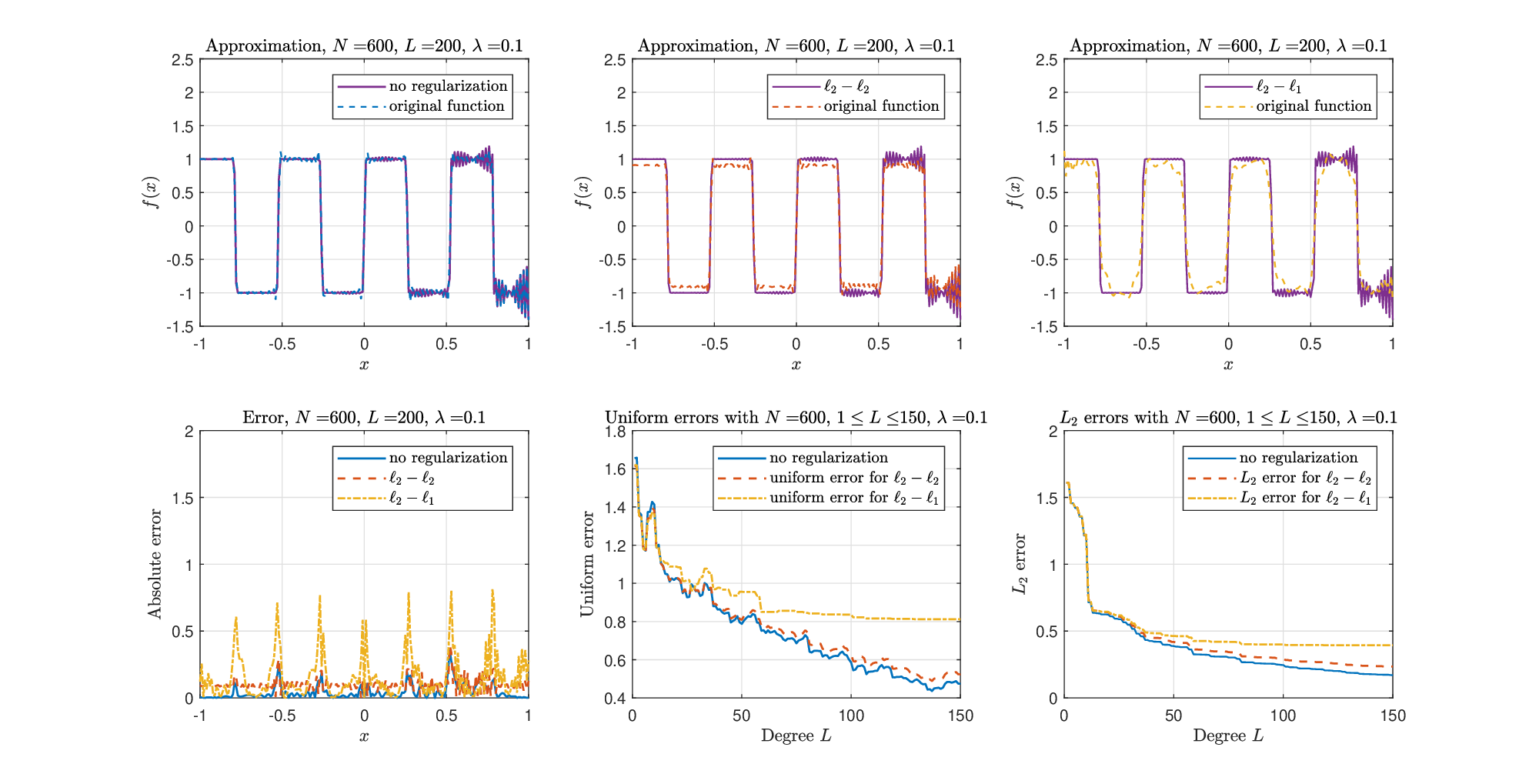}\\
  \caption{Approximation of function $f_1(x)=\tanh(20\sin(12x))+0.02\text{e}^{3x}\sin(300x)$ with exact values on the Chebyshev polynomials of the first kind}\label{smoothapproximation}
\end{figure}

\subsection{Regularized approximation models for contaminated data}\label{generalfunctiondenoising}
We consider
\begin{equation}\label{gatelike}
\displaystyle
f_2(x)=\begin{cases}
\frac{5\sin(5\pi x)}{5 \pi x},&x\neq0,\vspace{0.2cm}\\
5,&x=0,
\end{cases}
\end{equation}
which is the Fourier transform of the gate signal
\begin{equation*}
g(t)=\begin{cases}
1,& |t|\leq5/2,\\
0,& |t|>5/2,
\end{cases}
\end{equation*}
see \citet{bracewell1965the}. We use regularized least squares models to reduce Gaussian white noise added to the function \eqref{gatelike} with the signal-noise ratio (SNR) 10 dB.
The choice of $\lambda$ is critical in these models, so we first consider the relation between $\lambda$ and approximation errors to choose the optimal $\lambda$. Let $L=30$ and $N=100$. We take $\lambda=10^{-15},~10^{-14.5}~10^{-14},~,\ldots,10^{4.5},~10^{5}$ to choose the best regularization parameter. Here we choose $\lambda=10^{-1}$. For more advanced methods to choose the parameter $\lambda$, we refer to \citet{lazarov2007balancing} and \citet{pereverzyev2015parameter} for a further discussion.

Fig. \ref{fig:recover} shows that the $\ell_2-$ and $\ell_1-$regularized approximation models with $\lambda=10^{-1}$ are effective in recovering the noisy function. In the case we let
\begin{equation*}
\mu_{\ell}=\frac{1}{F(\ell/L)},\quad \ell=0,1,\ldots,L,
\end{equation*}
where the filter function $F$ is defined as \citep{an2012regularized}
\begin{equation*}
F(x):=\begin{cases}
1,& x\in[0,1/2],\\
\sin^2\pi x,& x\in[1/2,1],\\
0,& x\in[1,+\infty].
\end{cases}
\end{equation*}
In this case, $\{\mu_{\ell}\}_{\ell=0}^L$ is a sequence of nonnegative nondecreasing parameters.
\begin{figure}[htbp]
  \centering
  \includegraphics[width=\textwidth]{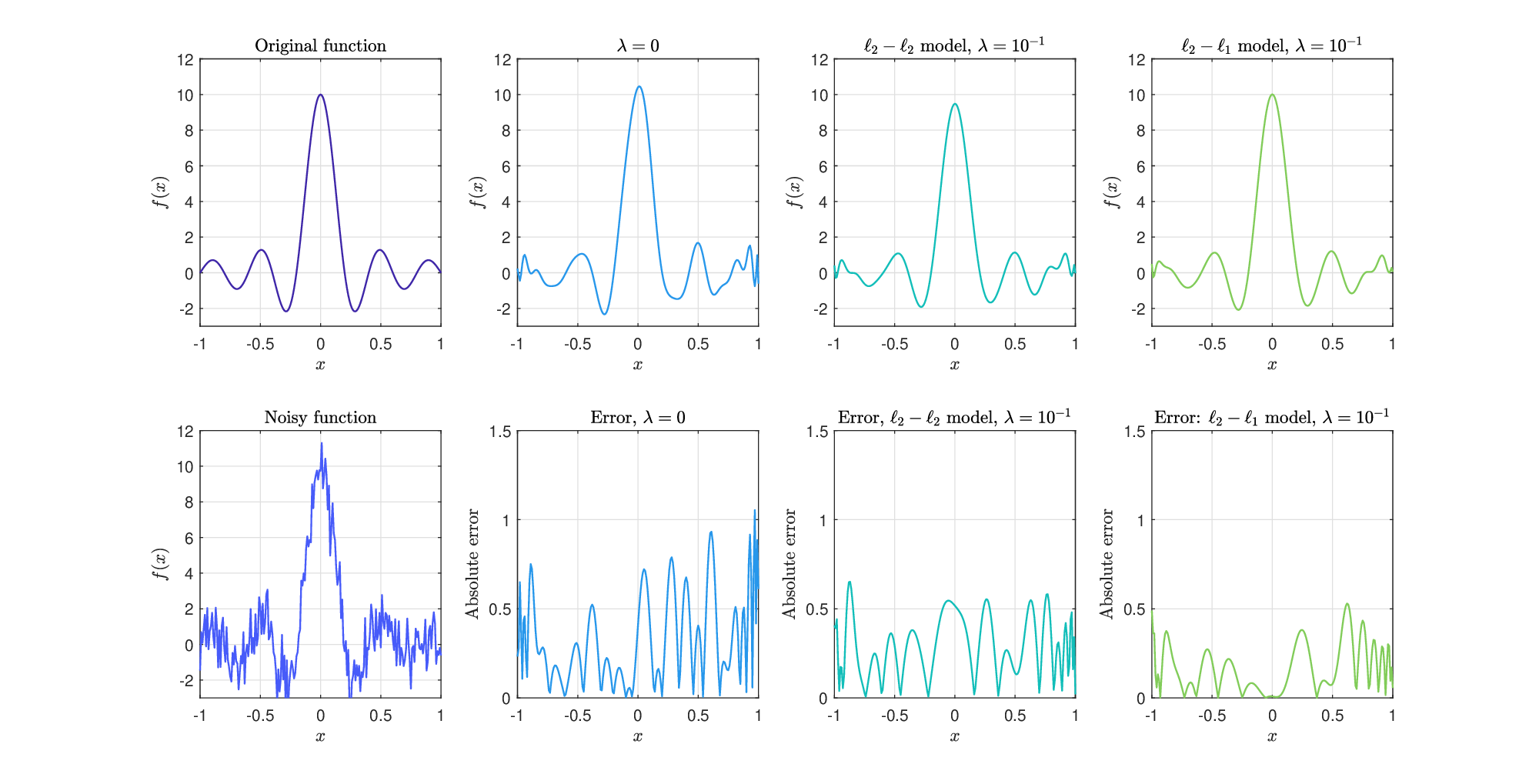}\\
  \caption{Regularized approximation models with $L=30$ and $N=100$ to recover the function \eqref{gatelike} from contaminated data on the Chebyshev polynomials of the first kind}\label{fig:recover}
\end{figure}

Results in Fig. \ref{fig:error} illustrate that the $\ell_1-$regularized approximation model is the best choice when recovering a contaminated function, which accords with the known facts \citep{lu2009sparse}. Both Fig. \ref{fig:recover} and Fig. \ref{fig:error} show that regularized models are better than those without regularization ($\lambda=0$).
\begin{figure}[htbp]
  \centering
  \includegraphics[width=\textwidth]{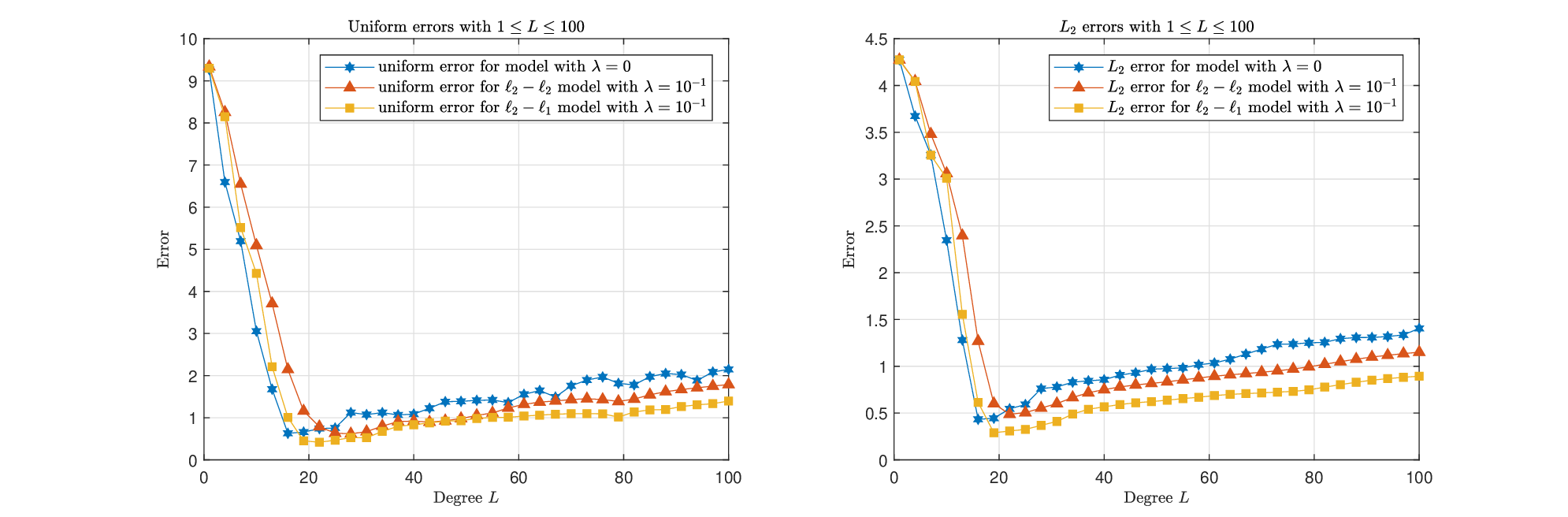}\\
  \caption{Errors for regularized approximation model to recover $f_2(x)$ with fixed $N=100$ on the Chebyshev polynomials of the first kind}\label{fig:error}
\end{figure}

Besides this, consider the highly oscillatory function $$f_3(x)=\text{Airy}(40x),\quad  x\in[-1,1],$$ with 12dB Gauss white noise added (noisy function is shown in Fig. \ref{highoscillated}). We use regularized barycentric formulae \eqref{L2regubary} and \eqref{L1regubary} to conduct this experiment. Let $L=N=500$ and $\{\mu_{\ell}\}_{\ell=0}^L$ be the same as above. Different values of $\lambda$, say $10^{-1}$, $10^{-2}$, $10^{-5}$, lead to different results, see Fig. \ref{highoscillated}.
This experiment indicates that one could apply a simple formula to reduce noise, rather than employ an iterative scheme.
\begin{figure}[htbp]
  \centering
  \includegraphics[width=\textwidth]{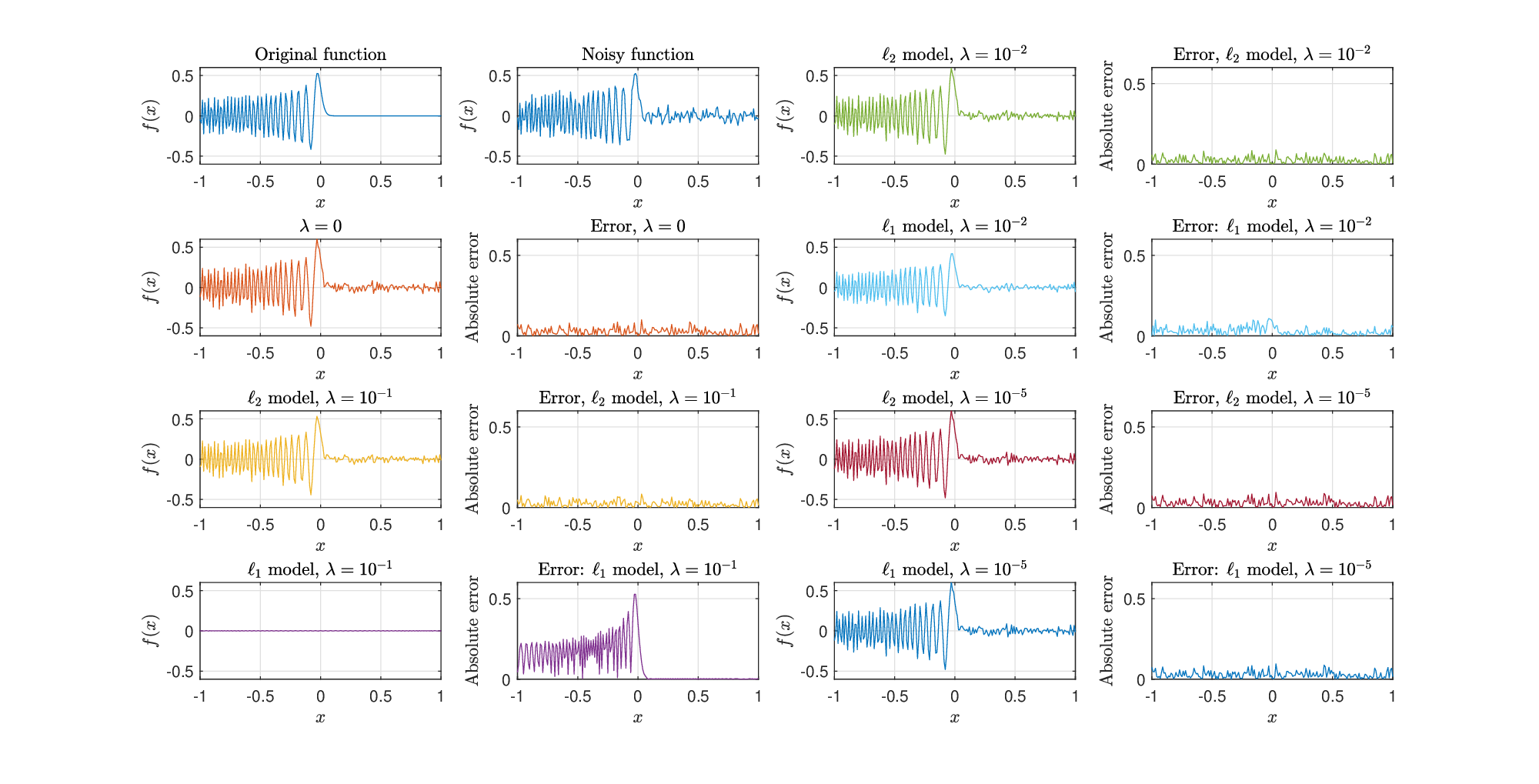}\\
  \caption{Denoising by regularized barycentric formulae with $L=N=500$ on Chebyshev points of the first kind: function $f_3(x)=\text{Airy}(40x)$ with 12dB Gauss white noise added }\label{highoscillated}
\end{figure}

These numerical examples illustrate that for some problems, $\ell_2-$regularization also can be better than $\ell_1-$regularization. For example, $\lambda=10^{-1}$ suits $\ell_2-$regularization, but almost straightens the function with $\ell_1-$regularization. Besides this, we can see that $\ell_2-$regularization contains lower sensitivity than $\ell_1-$regularization.

\section{Concluding remarks}
In this paper, we have investigated minimizers to the $\ell_2-$ and $\ell_1-$regularized least squares approximation problems with the aid of Gauss quadrature points and orthogonal polynomials on the interval $[-1,1]$. Based on those explicit constructed approximation polynomials \eqref{p:l2} and \eqref{p:l1}, the $\ell_2$-regulariarized barycentric interpolation formula \eqref{L2regubary} and the $\ell_1-$regulariarized barycentric interpolation formula \eqref{L1regubary} have been derived. In addition, Lebesgue constants are studied in the case of using Legendre polynomials and normalized Chebyshev polynomials of the first kind as the basis for the polynomial space $\mathbb{P}_L$. A bound for sparsity of the solution to $\ell_1-$regularized approximation is obtained by the refinement of the subgradient. Numerical results indicate that both the $\ell_2-$regularized approximation polynomial and the $\ell_1-$regularized approximation polynomial are practicable and efficient. Regularization parameter choice strategies and error bounds of approximation should be studied in future work.  These results provide new insights into the $\ell_2-$ and $\ell_1-$regularized approximation, and can be adapted to many practical applications such as noise reduction by using the barycentric interpolation scheme on Gauss quadrature points.



\bibliographystyle{IMANUM-BIB}
\bibliography{IMANUM-refs}

\clearpage

\appendix

\section*{Appendix A.\ Proof of Lemma \ref{dirichletkernel}}
\label{app1}
\begin{proof} We adopt the proof of
\citet[Lemma 2.2]{rivlin1980introduction}, but bring up a sharper bound of this inequality:
\begin{equation}\label{ineq:R1}
\frac{1}{2\pi}\int_{-\pi}^{\pi}|D_n(x)|dx=\frac{1}{2\pi}\int_{-\pi}^{\pi}\Big|\frac{\sin(n+\frac12)x}{\sin\frac x2}\Big|dx<\frac{4}{\pi^2}\log n+3,\quad\quad n\geq1.
\end{equation}
When $n=1$, $\frac{1}{2\pi}\int_{-\pi}^{\pi}|D_n(x)|dx=1$.
 Suppose $n\geq2$, then
\begin{align}
\frac{1}{2\pi}\int_{-\pi}^{\pi}\Big|\frac{\sin(n+\frac12)x}{\sin\frac x2}\Big|dx\nonumber
&=\frac{1}{\pi}\int_{0}^{\pi}\frac{|\sin(n+\frac12)x|}{\sin\frac x2}dx\\ \nonumber
&=\frac{1}{\pi}\int_{0}^{\pi}\left|\frac{\sin nx}{\tan\frac x2}+\cos nx\right|dx\\
&\leq \frac{1}{\pi}\int_0^{\pi}\frac{|\sin nx|}{\tan \frac x2}dx+\frac{1}{\pi}\int_0^{\pi}|\cos nx|dx.\label{expression1}
\end{align}We have
\begin{equation}\label{expression2}
\frac{1}{\pi}\int_0^{\pi}|\cos nx|dx=\frac{2}{\pi},
\end{equation}
and
\begin{equation}\label{expression3}
\int_0^{\pi}\frac{|\sin nx|}{\tan \frac x2}dx\leq2\int_0^{\pi}\frac{\sin x}{x}dx+\frac{4}{\pi}[1+\log(n-1)].
\end{equation}
Together with \eqref{expression1}, \eqref{expression2} and \eqref{expression3}, we obtain
\begin{equation*}
\frac{1}{2\pi}\int_{-\pi}^{\pi}\Big|\frac{\sin(n+\frac12)x}{\sin\frac x2}\Big|dx
\leq\frac{2}{\pi}\int_0^{\pi}\frac{\sin x}{x}dx+\frac{4}{\pi^2}[1+\log(n-1)]+\frac{2}{\pi}.\\
%
\end{equation*}
Thus,
\begin{equation}\label{ineq:eta}
\frac{1}{2\pi}\int_{-\pi}^{\pi}|D_n(x)|dx\leq\frac{4}{\pi^2}\log(n-1)+\eta,
\end{equation}
where
\begin{equation*}
\eta:=\frac{4}{\pi^2}+\frac{2}{\pi}+\dfrac{2\int_0^{\pi}\frac{\sin x}{x}dx}{\pi}.
\end{equation*}
The value of $\eta$ can be calculated by MATLAB with the function \verb"quadgk" as
$$\eta:=\frac{4}{\pi^2}+\frac{2}{\pi}+\dfrac{2\int_0^{\pi}\frac{\sin x}{x}dx}{\pi}=2.220884\ldots.$$
It is not difficult to see inequality \eqref{ineq:eta} is sharper than inequality \eqref{ineq:R1} for $n\geq 2$.
\end{proof}

\end{document}